\newfont{\footsc}{cmcsc10 at 8truept}
\newfont{\footbf}{cmbx10 at 8truept}
\newfont{\footrm}{cmr10 at 10truept}
\theoremstyle{plain}
\newtheorem{theorem}{Theorem}[section]
\newtheorem{proposition}[theorem]{Proposition}
\newtheorem{lemma}[theorem]{Lemma}
\theoremstyle{definition}
\newtheorem{definition}[theorem]{Definition}
\newtheorem{remark}[theorem]{Remark}
\newtheorem{example}[theorem]{Example}
\DeclareMathOperator{\1}{\bf{1}}
\DeclareMathOperator{\Var}{Var}
\DeclareMathOperator{\vhk}{Var_{HK}}
\DeclareMathOperator{\Prob}{{\bf P}}
\DeclareMathOperator{\modulo}{mod}
\newcommand{\N}{{\mathbb{N}}}
\newcommand{\R}{{\mathbb{R}}}
\newcommand{\E}{{\mathbb{E}}}
\def\bs0{\bf 0}
\title{On Negatively Dependent Sampling Schemes, Variance Reduction, and Probabilistic Upper Discrepancy Bounds}
\author{Marcin Wnuk\thanks{Mathematisches Seminar, Universit\"at Osnabr\"uck,
Germany ({\tt marcin.wnuk@math.uni-osnabrueck.de}).}
\and Michael Gnewuch\thanks{Institut f\"ur Mathematik, Universit\"at Osnabr\"uck,
Germany ({\tt michael.gnewuch@uni-osnabrueck.de}).}
\and Nils Hebbinghaus\thanks{Institut f\"ur Informatik, Christian-Albrechts-Universit\"at zu Kiel, Germany
({\tt Nils.Hebbinghaus@gmx.de}).}}
\begin{document}

\maketitle
\vskip 1pc

%KEYWORDS: star discrepancy, randomized quasi-Monte Carlo methods, mixed sequences, padding

\begin{abstract}
We study some notions of negative dependence of a sampling scheme that can be used to derive variance bounds for the corresponding estimator or discrepancy bounds for the underlying random point set that are at least as good as the corresponding bounds for plain Monte Carlo sampling. 

We provide  new pre-asymptotic bounds with explicit constants for the star discrepancy and the weighted star discrepancy of sampling schemes that satisfy suitable negative dependence properties. Furthermore, we compare the different notions of negative dependence and give several examples of negatively dependent sampling schemes, including mixed sequences. 

\end{abstract}

\section{Introduction}

Plain Monte Carlo (MC) sampling is a method frequently used in stochastic simulation and multivariate numerical integration.
Let $p_1, \ldots, p_N$ be independent random points, each uniformly distributed in
the $d$-dimensional unit cube $[0,1)^d$. For an arbitrary integrable random variable (or function) $f: [0,1)^d \to \R$ we consider the MC estimator (or quadrature)
\begin{equation}\label{MC_estimator}
%\mu(f) =
\mu^{\rm MC}(f) = \frac{1}{N} \sum_{i=1}^N f(p_i)
\end{equation}
for the expected value (or integral)
\begin{equation*}
I(f) = \int_{[0,1)^d} f(u) \, du.
\end{equation*}

An advantage of the MC estimator is that already under the very mild assumption on $f$ to be square integrable, it  converges to $I(f)$ for $N\to \infty$ with convergence rate $1/2$.
Even though the convergence rate is not very impressive, it has the invaluable advantage that it does not depend on the number of variables $d$.
%(However, the variance of the integrand might depend
%\vspace{20ex}

However, there are many dependent sampling schemes (i.e., random sample points $p_i$, $i=1, \ldots, N$, that are still uniformly distributed in $[0,1)^d$, but not necessarily independent any more)  known that are superior to plain MC sampling with respect to  certain objectives.
An example are suitably randomized quasi-Monte Carlo (RQMC) point sets.
They ensure, for instance, higher convergence rates  for numerical integration of sufficiently smooth functions, they lead to much smaller  asymptotic discrepancy measures, their sample points do not tend to cluster and have more evenly distributed lower dimensional projections (see, e.g., \cite{DKS13, DP10, Lem09, Mat10}).
%\vspace{20ex}
It would be desirable to be able to construct dependent sampling schemes that have some of these or other favorable properties, and that are, with respect to other objectives, at least as good as MC sampling schemes.
%\vspace{20ex}

Recently, in this direction some research had been done.
In \cite{Lem18} Christiane Lemieux showed that  a negative dependence property of RQMC points ensures that the variance of the corresponding RQMC estimator for functions $f$ that are monotone with respect to each variable is never larger than the variance of the corresponding MC estimator $\mu^{\rm MC}f$. She also proved that a different negative dependence property yields that the variance of the RQMC estimator for an arbitrary bounded quasi-monotone $f$ is never larger than the variance of $\mu^{\rm MC}f$.
Those negative dependence properties rely solely on the marginals and the bivariate copulas of the RQMC points (i.e., on the distribution of single points and on the common distribution of pairs of points). Related results can be found in \cite{WG19}.

 In a different line of research the second and the third author of this book chapter showed in \cite{GH18, Heb12} that a specific negative dependence property of RQMC points guarantees that they satisfy the same pre-asymptotic probabilistic discrepancy bounds (with explicitly  revealed dependence on the number of points $N$ as well as on the dimension $d$) as MC points.
Here the negative dependence property relies on
the common distribution of all sample points.
Related results can be found in \cite{DDG18}.

For more extensive motivations of both lines of research we refer to the elaborate introductions of \cite{Lem18} and \cite{DDG18, GH18}, respectively.
The aim of this book chapter is to survey and compare the approaches mentioned above and to provide several new results.

This chapter is organized as follows: In Section~\ref{SECT2} we introduce some notions of negatively dependent sampling schemes and discuss how one can benefit from them.
In Section~\ref{SECT3} we provide new probabilistic upper discrepancy bounds for sampling schemes. The discrepancy measures we consider are the star discrepancy and the weighted star discrepancy. These bounds are ``plug-in results'' in the following sense: One just has to check whether a sampling scheme satisfies the sufficient negative dependence condition and -- if this is the case -- obtains immediately a probabilistic discrepancy bound with explicitly given constants.
In the Section~\ref{SECT4}, we give several examples of sampling schemes
that satisfy the one or the other notion of negative dependence, including a generalized notion of stratified sampling schemes and mixed randomized sequences. 
Finally, in the Section~\ref{SECT5} we elaborate on relations between different notions of negative dependence.

%\subsection{Notation}
We finish the introduction by stating some notation.
Let $d,N \in \mathbb{N}.$ If not stated otherwise we are always considering a randomized point set $(p_j)_{j = 1}^N := \mathcal{P} \subset [0,1)^d$ consisting of $N$ points. For $a,b \in \mathbb{R}^d,a = (a_1,\ldots, a_d), b = (b_1,\ldots, b_d)$ we write $a \leq b$ if $a_i \leq b_i, i = 1, \ldots, d.$ All other inequalities are also to be understood componentwise. Moreover, $[a,b) := [a_1,b_1) \times \ldots \times [a_d,b_d)$. Via $ \mathcal{C}^d_0 $ we denote the set of boxes (``corners'') anchored at $0$
$$\mathcal{C}^d_0 := \{[0,a) \, | \,  a \in [0,1)^d \}, $$
and by $\mathcal{C}_1^d$ the set of boxes anchored at $1,$
$$\mathcal{C}^d_1 := \{[a,1) \,  | \,  a \in [0,1)^d \}. $$
We write $\mathcal{D}_0^d$ for the set of differences of boxes anchored at $0$,
$$\mathcal{D}_0^d := \{Q \setminus R \,  | \,  Q,R \in \mathcal{C}_0^d\}.$$
For $m\in \N$ we denote the set $\{1,2,\ldots,m\}$ by $[m]$, $\lambda^d$ stands for the d-dimensional Lebesgue measure on $\mathbb{R}^d,$ in case $d = 1$ we just write $\lambda.$ If not specified, all random variables are defined on a
probability space $(\Omega, \Sigma, \Prob)$.

\section{Review of Notions of Negative Dependance of Sampling Schemes}
\label{SECT2}

\subsection{$\gamma$-Negative Dependence of Binary Random Variables and Sampling Schemes}
The concept of negative dependence was introduced by Lehmann
\cite{Leh66} for pairs of random variables.
In the literature one finds several contributions on rather demanding
notions of negative dependence as, e.g., negative association
introduced in \cite{JDP83}; a survey can be found in \cite{Pem00}.
Sufficient for our purpose is the following notion
for \emph{Bernoulli} or \emph{binary random variables}, i.e.,
random variables that only take
values in $\{0,1\}$.

\begin{definition}\label{Def_Neg_gamma_Dep}
Let $\gamma \ge 1$.
We call binary random variables
$T_1,T_2,\ldots,T_N$
\emph{upper $\gamma$-negatively dependent} if
\begin{equation}
\label{cond2'}
\Prob\left( \bigcap_{j\in u} \{ T_j = 1 \}\right)
\le \gamma \prod_{j\in u} \Prob( T_j = 1)
\hspace{2ex}\text{for all $u \subseteq [N]$,}
\end{equation}
and \emph{lower $\gamma$-negatively dependent} if
\begin{equation}
\label{cond2''}
\Prob\left( \bigcap_{j\in u} \{T_j = 0 \}\right)
\le \gamma \prod_{j\in u} \Prob(T_j = 0)
\hspace{2ex}\text{for all $u \subseteq [N]$.}
\end{equation}
We call $T_1,T_2,\ldots,T_N$ \emph{$\gamma$-negatively dependent} if
both conditions (\ref{cond2'}) and (\ref{cond2''}) are satisfied.
If $\gamma = 1$, we usually suppress the explicit reference to
$\gamma$.
\end{definition}

$1$-Negative dependence is usually called negative orthant dependence, cf. \cite{BSS82}.

Notice that, in particular, independent binary random variables are negatively dependent.
Furthermore, it is easily seen that for $N=2$ and $\gamma =1$
the notions of upper and lower $\gamma$-negative dependence are
equivalent, cf. \cite{Leh66}.

We are interested in binary random variables $T_i$, $i=1, \ldots,N$, of the form
$T_i = \1_A(p_i)$, where $A$ is a Lebesgue-measurable subset of $[0,1)^d$ %for arbitrarily large $d$
(whose characteristic function
is denoted by $\1_A$),  and $p_1, \ldots, p_N$ are randomly chosen points in $[0,1)^d$.

We will use the following bound of Hoeffding-type; for a proof see, e.g., \cite{Heb12}. %\cite{GnePRN}.

\begin{theorem}\label{Hoeffding}
Let $\gamma \ge 1$, and let $T_1, \ldots , T_N$ be
$\gamma$-negatively dependent binary random variables. Put $S:= \sum^N_{i=1} (T_i-\E[T_i])$.
We have
\begin{eqnarray}\label{Hoeffding_both}
\Prob\left( |S| \geq t\right)  \leq 2\gamma\exp\left(-\frac{2t^2}{N}\right)
\hspace{2ex}\text{for all $t>0$.}
\end{eqnarray}
\end{theorem}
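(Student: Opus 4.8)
The plan is to follow the classical Chernoff--Hoeffding route, but carefully tracking where the constant $\gamma$ enters and doubling it so it covers both tails. First I would fix $t > 0$ and handle the upper tail $\Prob(S \ge t)$ and the lower tail $\Prob(S \le -t)$ separately, then add. By the exponential Markov inequality, for any $s > 0$ we have $\Prob(S \ge t) \le e^{-st}\,\E[e^{sS}]$, and the heart of the argument is to bound the moment generating function $\E[e^{sS}] = \E\bigl[\prod_{i=1}^N e^{s(T_i - \E[T_i])}\bigr]$.

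For genuinely independent $T_i$ this factorizes, and Hoeffding's lemma gives $\E[e^{s(T_i - \E[T_i])}] \le e^{s^2/8}$ for each $i$ (since $T_i \in [0,1]$), yielding $\E[e^{sS}] \le e^{Ns^2/8}$ and, after optimizing $s = 4t/N$, the bound $e^{-2t^2/N}$. Under mere $\gamma$-negative dependence the product does not factorize, so the key step is to replace the factorization by an inequality. I would expand $\E\bigl[\prod_i e^{sT_i}\bigr]$ using the fact that $e^{sT_i} = 1 + (e^s - 1)T_i$ because $T_i$ is binary; multiplying out gives a sum over subsets $u \subseteq [N]$ of terms $(e^s-1)^{|u|}\,\E[\prod_{j\in u}T_j] = (e^s-1)^{|u|}\,\Prob(\bigcap_{j\in u}\{T_j=1\})$. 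For $s > 0$ every coefficient $(e^s-1)^{|u|}$ is nonnegative, so the upper $\gamma$-negative dependence inequality \eqref{cond2'} bounds this termwise by $\gamma$ times the same sum with $\Prob(\bigcap_{j\in u}\{T_j=1\})$ replaced by $\prod_{j\in u}\Prob(T_j=1)$, which reassembles into $\gamma\prod_i\E[e^{sT_i}]$. Pulling out the deterministic factor $e^{-s\sum_i\E[T_i]}$ then gives $\E[e^{sS}] \le \gamma\prod_i \E[e^{s(T_i-\E[T_i])}] \le \gamma\, e^{Ns^2/8}$, and optimizing $s$ as before yields $\Prob(S \ge t) \le \gamma\, e^{-2t^2/N}$. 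The lower tail is symmetric: apply the same expansion to $\E[e^{-sS}]$ using $e^{-sT_i} = 1 + (e^{-s}-1)T_i$, note that now the coefficients $(e^{-s}-1)^{|u|}$ have sign $(-1)^{|u|}$, so one instead pairs them with the complementary events and invokes lower $\gamma$-negative dependence \eqref{cond2''} — alternatively, and more cleanly, one substitutes $T_i' := 1 - T_i$, which are again $\gamma$-negatively dependent with the roles of \eqref{cond2'} and \eqref{cond2''} swapped, and applies the upper-tail bound to $-S = \sum_i(T_i' - \E[T_i'])$.

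Adding the two one-sided estimates gives $\Prob(|S|\ge t) \le 2\gamma\, e^{-2t^2/N}$, which is \eqref{Hoeffding_both}. The main obstacle, and the only place any real care is needed, is the sign bookkeeping in the MGF expansion: the termwise comparison via \eqref{cond2'} is valid precisely because all the coefficients $(e^s-1)^{|u|}$ are nonnegative for $s>0$, and one must not carelessly apply the same step to the lower tail where the alternating signs would break the inequality — hence the need for the $T_i \mapsto 1-T_i$ symmetrization or a separate appeal to \eqref{cond2''}. Everything else (Hoeffding's lemma for $[0,1]$-valued variables, the optimization over $s$) is routine, so I would cite it rather than reproduce it, consistent with the excerpt's pointer to \cite{Heb12}.
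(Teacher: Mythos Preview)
Your argument is correct and is exactly the standard route: expand $\prod_i e^{sT_i}$ via $e^{sT_i}=1+(e^s-1)T_i$, use upper $\gamma$-negative dependence termwise (valid because the coefficients are nonnegative for $s>0$) to get $\E[e^{sS}]\le\gamma\prod_i\E[e^{s(T_i-\E[T_i])}]$, apply Hoeffding's lemma and optimize; the lower tail via $T_i\mapsto 1-T_i$ is the clean way to invoke \eqref{cond2''}. Note, however, that the paper does not actually give a proof of this theorem---it only states the result and refers to \cite{Heb12}---so there is no in-paper argument to compare against; your write-up is precisely the kind of proof one would expect to find behind that citation.
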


\begin{definition}
A randomized point set $\mathcal{P} = (p_j)_{j = 1}^N$ is called a \emph{sampling scheme} if every single $p \in \mathcal{P}$ is distributed uniformly in $[0,1)^d$ and the vector $(p_1, \ldots, p_N)$ is exchangeable, meaning that for any permutation $\pi$ of $[N]$ it holds that the law of $(p_1, \ldots, p_N)$ is the same as the law of $(p_{\pi(1)}, \ldots, p_{\pi(N)}).$
\end{definition}
The assumption of exchangeability is only of technical nature and, if we consider $\mathcal{P}$ as a randomized point set, may be always obtained in the process of symmetrization. Indeed, let $\tilde{\mathcal{P}}$ be a point set such that every $\tilde{p} \in \tilde{\mathcal{P}}$ is uniformly distributed in $[0,1)^d$ and let $\pi$ be a random uniformly chosen permutation of $[N].$ Then $(\tilde{p}_{\pi(1)}, \ldots, \tilde{p}_{\pi(N)})$ is already a sampling scheme.

\subsection{Pairwise Negative Dependence and Variance Reduction}
\begin{definition}
We say that a sampling scheme $\mathcal{P}$ is \emph{pairwise negatively dependent}  if for every $Q,R \in \mathcal{C}_1^d$ it holds that the random variables
$$\1_Q(p_1), \1_R(p_2) $$
are negatively dependent.
In other words, a sampling scheme $\mathcal{P}$ is pairwise negatively dependent if for every $Q,R \in \mathcal{C}_1^d$ we have 
\begin{equation}\label{pNLOD}
 \Prob(p_1 \in Q, p_2 \in R) \leq \Prob(p_1 \in Q) \Prob(p_2 \in R),
\end{equation}
\begin{equation}\label{pNUOD}
\Prob(p_1 \notin Q, p_2 \notin R) \leq \Prob(p_1 \notin Q) \Prob(p_2 \notin R).
\end{equation}
\end{definition}
Note  that (\ref{pNLOD}) implies (\ref{pNUOD}) and vice versa, therefore one of the conditions is in fact redundant.
In \cite{Lem18} this is known as negatively upper orthant dependent - or NUOD - sampling schemes.

Our interest lies in numerical integration of functions from some class $\mathcal{F} \subset L^2([0,1)^d)$ using RQMC. A QMC quadrature is just a quadrature consisting of $N$ nodes, such that the evaluation in every node is given the same weight $\tfrac{1}{N}.$ By randomizing the set of nodes we obtain an RQMC quadrature.
Let $\mu_{\mathcal{P}}f$ be an RQMC estimator of $I(f) := \int_{[0,1)^d} f(u) \, du$ based on the sampling scheme $\mathcal{P} = (p_i)_{i = 1}^N,$ i.e.
$$\mu_{\mathcal{P}}f = \frac{1}{N} \sum_{i = 1}^N  f(p_i).$$
Moreover, let $\mu^{\rm{MC}}f$ be an estimator of $I(f)$ based on a Monte Carlo sample consisting of $N$ points (i.e. the integration nodes are chosen independently and uniformly from $[0,1)^d$, see (\ref{MC_estimator})).

It turns out that randomized QMC quadratures based on  pairwise negatively dependent sampling schemes may lead to variance reduction in comparison to the simple MC quadratures.  Here we describe shortly one of such cases, namely when integrands are bounded quasimonotone functions. The following exposition is based on \cite{Lem18}.

To define what a quasimonotone function is we need first to introduce the notion of quasivolume.
For $a,b \in [0,1)^d,$ $J \subset [d]$ and a function $f:[0,1)^d \rightarrow \mathbb{R}$ we write $f(a_J,b_{-J})$ to represent the evaluation of $f$ at the point $(x_1, \ldots, x_d),$ where $x_j = a_j$ for $j \in J$ and $x_j = b_j$ otherwise.
The quasivolume of $f$ over an interval $A = [a,b) \subset [0,1)^d$ is given by
$$\Delta^{d}(f,A) := \sum_{J \subset [d]} (-1)^{|J|} f(a_J, b_{-J}).$$
We say that the function $f$ is \emph{quasimonotone} if
$$\Delta^d(f,A) \geq 0 $$
for every interval $A.$
Note that if we define a content $\nu_{f}([0,a)) := f(a), a \in [0,1)^d$ then quasimonotonicity of $f$ means exactly that for any axis-parallel rectangle $R \subset [0,1)^d$ it holds $\nu_f(R) \geq 0.$

Apart from pairwise negative dependence there are a few similar notions which are also of interest. Let $p_j = (p_j^{(1)},\ldots, p_j^{(d)}), j = 1, \ldots, N.$ If for every $i = 1, \ldots, d,$ and every measurable $A,B \subset [0,1)^{i-1}, \alpha,\beta \in [0,1)$ 
\begin{align*}
&\Prob(p_1^{(i)} \geq \alpha, p_2^{(i)} \geq \beta | p_1^{(1:i-1)} \in A, p_2^{(1:i-1)} \in B)
\\
&\leq \Prob(p_1^{(i)} \geq \alpha| p_1^{(1:i-1)} \in A, p_2^{(1:i-1)} \in B) \Prob(p_2^{(i)} \geq \beta| p_1^{(1:i-1)} \in A, p_2^{(1:i-1)} \in B), 
\end{align*}
we say that the sampling scheme $(p_j)_{j = 1}^N$ is \emph{conditionally negatively quadrant dependent} (conditionally NQD). Here $p^{(1:i-1)}$ denotes the orthogonal projection of $p$ onto its first $i-1$ coordinates. Note that the conditional NQD property holds in particular if $(p_1^{(i)}, p_2^{(i)})_{i = 1}^d$ are independent and for every $i = 1, \ldots, d,$  and every $q,r \in [0,1)$ we have
$$\Prob(p_1^{(i)} \in [q,1), p_2^{(i)} \in [r,1)) \leq \Prob(p_1^{(i)} \in [q,1)) \Prob(p_2^{(i)} \in [r,1)),$$
in which case we talk of a \emph{coordinatewise independent NQD sampling scheme}. Christiane Lemieux showed in \cite[Corollary 2]{Lem18} that conditionally NQD sampling schemes provide RQMC estimators of integrals with variance no bigger then the variance of the MC estimator if the integrand is monotone in each coordinate.

The following is basically a combination of Proposition $3,$ Remark $8$ and Corollary $2$ from \cite{Lem18}.
\begin{theorem}\label{VarRedThm}
Let $f:[0,1)^d \rightarrow \mathbb{R}$ and $\mathcal{P}$ be a  sampling scheme. Then if either
\begin{enumerate}
\item The function $f$ is bounded and $f$ or $-f$ is quasimonotone and $\mathcal{P}$ is pairwise negatively dependent,
\item The function $f$ is monotone in each coordinate and $\mathcal{P}$ is conditionally negatively quadrant dependent,
\end{enumerate}
it holds
$$\Var(\mu_{\mathcal{P}}f) \leq \Var(\mu^{\rm{MC}}f).$$
\end{theorem}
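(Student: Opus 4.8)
The plan is to pass from the variance of the RQMC estimator to a sum of covariances, and then to control each covariance by expressing the integrand via its "quasi-volume content" and exploiting the negative dependence of indicator functions of anchored boxes. First, since $\mathcal{P}$ is a sampling scheme, each $p_i$ is uniform on $[0,1)^d$, so $\E[f(p_i)] = I(f)$ and
\begin{equation*}
\Var(\mu_{\mathcal{P}}f) = \frac{1}{N^2}\sum_{i=1}^N \Var(f(p_i)) + \frac{1}{N^2}\sum_{i\neq j}\mathrm{Cov}(f(p_i),f(p_j)).
\end{equation*}
The diagonal term equals $\frac{1}{N}\Var_{\lambda^d}(f)$, which is exactly the MC variance $\Var(\mu^{\rm MC}f)$; and by exchangeability the off-diagonal sum equals $\frac{N-1}{N}\,\mathrm{Cov}(f(p_1),f(p_2))$. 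Hence the claim reduces to showing $\mathrm{Cov}(f(p_1),f(p_2)) \le 0$ under either hypothesis.

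For case (1), the idea is to write $f$ (say $f$ quasimonotone; the case $-f$ quasimonotone is symmetric) as an integral against a nonnegative content. Concretely, using the quasivolume/content $\nu_f$ introduced before the theorem, boundedness and quasimonotonicity let one represent, up to an additive constant,
\begin{equation*}
f(x) = f(\mathbf{1}) - \int_{[0,1)^d} \1_{[a,1)}(x)\, d\nu_f(a) \quad\text{(an inclusion–exclusion/Stieltjes identity)},
\end{equation*}
so that $\mathrm{Cov}(f(p_1),f(p_2)) = \int\!\!\int \mathrm{Cov}(\1_{[a,1)}(p_1),\1_{[b,1)}(p_2))\, d\nu_f(a)\, d\nu_f(b)$ by Fubini (justified by boundedness and finiteness of the total content). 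Since $[a,1), [b,1) \in \mathcal{C}_1^d$ and $\mathcal{P}$ is pairwise negatively dependent, inequality (\ref{pNLOD}) gives $\mathrm{Cov}(\1_{[a,1)}(p_1),\1_{[b,1)}(p_2)) \le 0$ pointwise, and integrating against the nonnegative product measure $\nu_f\otimes\nu_f$ yields $\mathrm{Cov}(f(p_1),f(p_2))\le 0$.

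For case (2), one argues coordinatewise by conditioning. Writing $F_i(\alpha \mid \text{past}) := \Prob(p^{(i)}\ge\alpha \mid p^{(1:i-1)})$ and using that $f$ is coordinatewise monotone, a standard Hoeffding-type covariance identity (Lehmann's lemma: for two random variables $X,Y$, $\mathrm{Cov}(X,Y) = \int\!\!\int \big[\Prob(X\le s, Y\le t) - \Prob(X\le s)\Prob(Y\le t)\big]\,ds\,dt$) applied coordinate by coordinate, together with the conditional NQD inequality, forces the conditional covariance contribution at each level $i$ to be nonpositive; iterating the tower property over $i=1,\dots,d$ gives $\mathrm{Cov}(f(p_1),f(p_2))\le 0$. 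This is essentially the argument underlying \cite[Corollary 2]{Lem18}, and here one only has to stitch it to the $N$-point variance decomposition via exchangeability.

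The main obstacle is the rigorous Stieltjes representation in case (1): one must make sense of $d\nu_f(a)$ as a genuine (signed, but here nonnegative) Lebesgue–Stieltjes measure on $[0,1)^d$, check that quasimonotonicity indeed makes it nonnegative, that boundedness of $f$ makes it finite, and that Fubini applies to interchange expectation with the double integral — handling carefully the half-open boxes and the boundary behaviour at the faces $x_j = 1$. Once that representation is in hand, the negative-dependence step is immediate. The rest (the variance decomposition and the exchangeability reduction) is routine.
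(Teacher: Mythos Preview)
The paper does not give its own proof of this theorem; it simply states that the result ``is basically a combination of Proposition~3, Remark~8 and Corollary~2 from \cite{Lem18}''. Your proposal is therefore not competing with a proof in the paper but rather reconstructing the argument from the cited reference, and it does so along the same lines: the reduction via exchangeability to $\mathrm{Cov}(f(p_1),f(p_2))\le 0$ is exactly the standard first step, the quasivolume/Stieltjes representation of a bounded quasimonotone $f$ in terms of indicators of $\mathcal{C}^d_1$-boxes is precisely what underlies \cite[Proposition~3]{Lem18}, and the coordinatewise conditioning combined with Lehmann's covariance identity is the content of \cite[Corollary~2]{Lem18}. So your route coincides with the one the paper points to.

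One small slip worth flagging: your displayed representation $f(x)=f(\mathbf{1})-\int \1_{[a,1)}(x)\,d\nu_f(a)$ has the wrong sign (with $\nu_f\ge 0$ and $f$ quasimonotone, the integral increases in $x$, so the right-hand side would be nonincreasing). The correct identity is of the form $f(x)=\text{const}+\int \1_{[a,1)}(x)\,d\nu_f(a)$ (or an equivalent formulation via $\mathcal{C}^d_0$-boxes after a reflection). This is harmless for the covariance argument, since the additive constant drops out and the sign error would anyway cancel in $\mathrm{Cov}(f(p_1),f(p_2))$, but you should fix it when writing the final version. Your identification of the genuine technical work---turning $\nu_f$ into an honest finite nonnegative Borel measure and justifying Fubini---is accurate; this is exactly where boundedness and quasimonotonicity are used in \cite{Lem18}.
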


In Section~\ref{SECT5} we discuss relations between the introduced notions of negative dependence.

Let us note that the aforementioned paper provides actually more general results. Interested reader will find details in Sections 3 and 4 of \cite{Lem18}.

For examples of pairwise negatively dependent and conditionally NQD sampling schemes see Sections \ref{R1L} and \ref{LHS}.

\subsection{Negatively Dependent Sampling Schemes and Discrepancy}
\label{PDB}
\begin{definition}
We say that a sampling scheme $(p_j)_{j = 1}^N = \mathcal{P}$ is $\mathcal{S}-\gamma-$\emph{negatively dependent} if for every $Q \in \mathcal{S}$ the random variables
$$(\1_Q(p_j))_{j = 1}^N $$
are $\gamma$-negatively dependent.
In other words for every $t  \leq N$ we require
\begin{equation}\label{NUODDef}
\Prob(\bigcap_{j = 1}^t \{ p_j \in Q\}) \leq \gamma \prod_{j = 1}^t \Prob(p_j \in Q),
\end{equation}
\begin{equation}\label{NLODDef}
\Prob(\bigcap_{j = 1}^t \{p_j \notin Q)\} \leq \gamma \prod_{j = 1}^t \Prob(p_j \notin Q).
\end{equation}
\end{definition}
Note that differently from the case of pairwise negative dependence, for $N > 2$ one indeed needs to check both inequalities as they do not, in general, imply one another. If $\gamma = 1$ and $\mathcal{S} = \mathcal{C}_{0}^d$ we usually talk just of negatively dependent sampling schemes. Moreover, if (\ref{NUODDef}) is satisfied we speak of upper $\gamma-$ negatively dependent sampling schemes and if (\ref{NLODDef}) is satisfied we speak of lower $\gamma-$ negatively dependent sampling schemes.

To motivate the interest in negatively dependent sampling schemes we introduce the notion of discrepancy. Discrepancy is meant to quantify how far is a finite point set $P \subset [0,1)^d$ consisting of $N$ points from being equidistributed in $[0,1)^d.$ It plays an important role in fields like numerical integration, computer graphics, empirical process theory and many more. Let $x \in [0,1]^d $ and  $Q_x := [0,x) \in \mathcal{C}_0^d.$ We define the \emph{discrepancy function} $D_N(P,\cdot)$ for the point set $P$ at the point $x$ via
 $$D_N(P,x) := D_N(P, Q_x) := \left| \frac{1}{N} |P \cap Q_x| - \lambda^d(Q_x)\right| $$
and the \emph{star discrepancy} $D_N^*(P)$ by
$$D_N^*(P) := \sup_{x \in [0,1]^d} D_N(P,x).$$

 Making a connection to numerical integration we note one of the versions of the Koksma-Hlawka inequality, which states that for every point set $P$ consisting of $N$ points it holds
$$\left| \int_{[0,1)^d} f \,  d\lambda^d(x) - \frac{1}{N} \sum_{p \in P} f(p) \right| \leq D_N^*(P) \vhk(f),$$
where $\vhk(f)$ is the Hardy-Krause variation of $f$. The inequality is actually sharp, cf. \cite{Nie92}.

It has been shown in \cite{GH18} that $\mathcal{D}_0^d$ - $\gamma$ - negatively dependent sampling schemes have with large probability star discrepancy of the order $\sqrt{\tfrac{d}{N}}.$ More precisely the following theorem holds.

\begin{theorem}\label{main_theo_GH}
Let $d, N\in \N$ and $\rho \in [0,\infty)$. Let $\mathcal{P}= (p_j)_{j = 1}^N$ be a negatively $\mathcal{D}_0^d$-$e^{\rho d}$-dependent sampling scheme. \\
Then for every $c>0$
\begin{equation}\label{disc_bound_GH}
D^*_N(\mathcal{P}) \leq c\sqrt{\frac{d}{N}}
\end{equation}
holds with probability at least $1-e^{-(1.6741\cdot c^2 -10.7042- \rho)\cdot d}.$ Moreover, for every $\theta \in (0,1)$
\begin{equation}\label{disc_bound_AH+}
\Prob\left( D^*_N(\mathcal{P}) \leq 0.7729 \sqrt{ 10.7042 + \rho + \frac{\ln \big( (1-\theta)^{-1} \big) }{d}} \sqrt{\frac{d}{N}} \right) \geq \theta.
\end{equation}
\end{theorem}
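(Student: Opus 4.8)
The plan is to run a bracketing (covering) argument for the class $\mathcal{C}_0^d$ of anchored test boxes together with the exponential concentration of Theorem~\ref{Hoeffding}, and then to upgrade it by a dyadic chaining in order to strip off a spurious logarithmic factor. Write $\gamma := e^{\rho d}$ and extend the notation of the introduction by $D_N(\mathcal{P},A) := \big|\tfrac1N|\mathcal{P}\cap A| - \lambda^d(A)\big|$ for measurable $A\subseteq[0,1)^d$, so that $D_N^*(\mathcal{P}) = \sup_{x\in[0,1]^d} D_N(\mathcal{P},x)$. First I would recall Gnewuch's explicit bracketing bounds for anchored boxes: for every $\delta\in(0,1]$ there is a finite family $\Gamma_\delta$ of pairs $Q\subseteq R$ in $\mathcal{C}_0^d$ with $\lambda^d(R)-\lambda^d(Q)\le\delta$ such that every $[0,x)$ is sandwiched, $Q\subseteq[0,x)\subseteq R$, by some pair, with $|\Gamma_\delta|\le\tfrac{d^d}{d!}(1+\delta^{-1})^d\le\big(e(1+\delta^{-1})\big)^d$. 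Every box occurring in $\Gamma_\delta$, as well as every difference $R\setminus Q$, lies in $\mathcal{D}_0^d$, so by hypothesis the corresponding indicator vectors $(\1_A(p_j))_{j=1}^N$ are $\gamma$-negatively dependent and Theorem~\ref{Hoeffding} applies to them. On the event that $D_N(\mathcal{P},A)\le t$ holds simultaneously for all boxes $A$ occurring in $\Gamma_\delta$, the monotonicity $A\subseteq B\Rightarrow|\mathcal{P}\cap A|\le|\mathcal{P}\cap B|$ together with the sandwiching gives $D_N^*(\mathcal{P})\le t+\delta$; a union bound over the at most $2|\Gamma_\delta|$ boxes and Theorem~\ref{Hoeffding} yield $\Prob\big(D_N^*(\mathcal{P})>t+\delta\big)\le 4\gamma|\Gamma_\delta|e^{-2Nt^2}$. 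Optimizing $\delta$ at this stage already proves $D_N^*(\mathcal{P})=O\big(\sqrt{(d/N)\log(N/d)}\big)$, but the term $\log|\Gamma_\delta|\asymp d\log(N/d)$ in the exponent is exactly what blocks the clean, $N$-free bound (\ref{disc_bound_GH}): making $4\gamma|\Gamma_\delta|e^{-2Nt^2}$ small with $t\asymp\sqrt{d/N}$ would force $c\gtrsim\sqrt{\log N}$ rather than a constant $c$.

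To remove the logarithm I would replace the one-scale estimate by a dyadic chaining in the spirit of Aistleitner's argument, adapted to $\gamma$-negatively dependent points as in \cite{GH18}. Fix scales $\delta_k = 2^{-k}$ for $k\ge k_0$; for each $x$ build a nested chain of brackets $Q_x^{(k_0)}\subseteq Q_x^{(k_0+1)}\subseteq\cdots$ with $Q_x^{(k)}$ drawn from $\Gamma_{\delta_k}$, so that $\tfrac1N|\mathcal{P}\cap[0,x)|-\lambda^d([0,x))$ telescopes into a coarse contribution attached to $Q_x^{(k_0)}$ plus a sum of increment contributions attached to the sets $Q_x^{(k+1)}\setminus Q_x^{(k)}\in\mathcal{D}_0^d$, each of volume $O(\delta_k)$. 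This is exactly the point at which the hypothesis is needed in the form stated, i.e.\ negative dependence over $\mathcal{D}_0^d$ and not merely over $\mathcal{C}_0^d$. To each such small-volume increment I would apply a variance-sensitive Bernstein/Bennett-type exponential inequality for $\gamma$-negatively dependent binary variables (sharper than Theorem~\ref{Hoeffding}, and obtainable by the methods of \cite{Heb12}): a set of volume $O(\delta_k)$ contributes, outside an event of exponentially small probability, a deviation of order $\sqrt{\delta_k\log|\Gamma_{\delta_k}|/N}+\log|\Gamma_{\delta_k}|/N$, and since $\log|\Gamma_{\delta_k}|\asymp kd$ the leading term is $\asymp\sqrt{kd\,2^{-k}/N}$, whose sum over $k$ converges; adding the deterministically small remainder past the finest scale then gives $O(\sqrt{d/N})$. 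A union bound of the exponentially small exceptional events over all scales and all increments, followed by a joint optimization over the coarsest scale $k_0$ and the per-scale deviation levels, leaves a failure probability of the form $e^{-(\kappa_1 c^2-\kappa_2-\rho)d}$ for explicit $\kappa_1,\kappa_2$; a careful execution of this optimization yields $\kappa_1 = 1.6741$ and $\kappa_2 = 10.7042$, which is (\ref{disc_bound_GH}). The degenerate ranges — small $c$ (where the bound is vacuous) and small $N$ — are disposed of at once using $D_N^*(\mathcal{P})\le 1$.

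Finally, (\ref{disc_bound_AH+}) follows from (\ref{disc_bound_GH}) by pure algebra: given $\theta\in(0,1)$, set the failure probability $e^{-(1.6741\,c^2-10.7042-\rho)d}$ equal to $1-\theta$ and solve for $c$, obtaining $c = (1.6741)^{-1/2}\sqrt{10.7042+\rho+d^{-1}\ln\big((1-\theta)^{-1}\big)} = 0.7729\sqrt{10.7042+\rho+d^{-1}\ln\big((1-\theta)^{-1}\big)}$ (since $(1.6741)^{-1/2} = 0.7729\ldots$); for this $c$ one has $1.6741\,c^2-10.7042-\rho = d^{-1}\ln((1-\theta)^{-1})>0$, so (\ref{disc_bound_GH}) does apply. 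The main obstacle is the chaining step: it requires (i) the explicit bracketing-number bound for anchored boxes with a constant sharp enough to survive the optimization, (ii) a Bernstein-type concentration inequality for $\gamma$-negatively dependent binary variables that retains the variance term — the plain Hoeffding bound of Theorem~\ref{Hoeffding} alone cannot remove the $\sqrt{\log N}$ factor — and (iii) the somewhat delicate joint optimization over scales that pins down the precise constants $1.6741$ and $10.7042$; by comparison the sandwiching, the union bounds, and the degenerate cases are routine.
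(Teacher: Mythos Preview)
The paper does not actually prove Theorem~\ref{main_theo_GH}; it is quoted from \cite{GH18} with the remark ``for more details see \cite{GH18}.'' The only information the paper gives about the proof is the sketch in Remark~\ref{Remark_Counterpart}: decompose each anchored test box into finitely many disjoint sets $\Delta_1,\ldots,\Delta_K\in\mathcal{D}_0^d$ via a dyadic hierarchy, and then apply ``large deviation inequalities of Bernstein- and Hoeffding-type'' to each piece. Your proposal follows precisely this route --- Gnewuch's bracketing bound for the covers, Aistleitner-style dyadic chaining over scales $\delta_k=2^{-k}$ with telescoping increments lying in $\mathcal{D}_0^d$, a variance-sensitive Bernstein inequality to make the chaining sum $\sum_k\sqrt{k\,2^{-k}}$ converge, and the algebraic inversion for \eqref{disc_bound_AH+} --- so your outline agrees with the approach the paper attributes to \cite{GH18}. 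Your diagnosis of the key points (why $\mathcal{D}_0^d$ rather than $\mathcal{C}_0^d$ is needed, why Hoeffding alone leaves a $\sqrt{\log N}$ factor and Bernstein is required, and that the specific constants $1.6741$, $10.7042$ come only after a joint optimization over scales) is accurate.
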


Notice that these bounds depend only mildly on $\rho$ or $\gamma = e^{\rho d}.$ In particular, $\mathcal{D}_0^d$-$1$-negatively dependent sampling schemes satisfy the same preasymptotic discrepancy bound as Monte Carlo point sets do. For more details see \cite{GH18}.

In Remark~\ref{Remark_Counterpart} we present a bound similar to \eqref{disc_bound_AH+}
under a bit different assumptions that can be applied to so-called mixed randomized sequences.

\section{New Probabilistic Discrepancy Bounds}
\label{SECT3}

\subsection{Bound on the Star Discrepancy for Negatively Dependent Sampling Schemes}
Proving that a given sampling scheme is $\mathcal{D}_0^d$-$\gamma$-negatively dependent may turn out to be a difficult task. One of the problems lies in the fact that elements of $\mathcal{D}_0^d$ may in general not be represented as Cartesian products of one-dimensional  intervals, cf. also Remark~\ref{Remark_Counterpart}. With this in mind we would like to weaken the assumptions on the sampling scheme $\mathcal{P}.$ In the following result we show that by requiring  the sampling scheme $\mathcal{P}$ only to be $\mathcal{C}^d_0$-$\gamma$-negatively dependent one already gets with high probability a discrepancy of the order $\sqrt{\tfrac{d}{N} \log(e + \tfrac{N}{d}) }.$

 \begin{theorem}\label{main_theo}
Let $d, N\in \N$ and $\rho \in [0,\infty)$. Let $\mathcal{P}= (p_j)_{j=1}^N$ be a $\mathcal{C}^d_0$-$e^{\rho d}$-negatively dependent sampling scheme in $[0,1)^d.$ 
Then for every $c > 0$
\begin{equation}\label{NegDepBoxC}
D^*_N(\mathcal{P}) \leq c\sqrt{\tfrac{d}{N} \max \left\{ 1, \log\left( \frac{N}{d} \right)  \right\}\ }
\end{equation}
holds with probability at least $1 - 2 e^{(-\frac{1}{2}(c^2 - 1)\xi + \rho + \log(2e(\frac{2}{c} +1)))d},$
where $\xi = \max\left\{1, \log\left( \frac{N}{d} \right) \right\}.$
Moreover, for every $\theta \in (0,1)$
\begin{equation}\label{NegDepBoxTheta}
\Prob\left(D^{*}_N(\mathcal{P}) \leq \sqrt{\frac{2}{N}} \sqrt{d \log(\eta) + \rho d + \log\left( \frac{2}{1 - \theta} \right)} \right) \geq \theta,
\end{equation}
where $\eta:= \eta(N,d) = 6e \left( \max(1, \frac{N}{2d \log(6e)})  \right)^{\frac{1}{2}}.$
\end{theorem}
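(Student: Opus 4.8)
The plan is to reduce the supremum over the uncountable family $\{Q_x : x\in[0,1]^d\}$ to a finite maximum via a $\delta$-net (bracketing) argument, apply the Hoeffding-type bound of Theorem~\ref{Hoeffding} to each corner in the net using the $\mathcal{C}^d_0$-$e^{\rho d}$-negative dependence hypothesis, and then optimize the net resolution $\delta$ as a function of $N$ and $d$. Concretely, for a grid resolution $m\in\N$ put $\delta=1/m$ and consider the grid points $G_m=\{0,\tfrac1m,\dots,1\}^d$. For arbitrary $x\in[0,1]^d$ one sandwiches $Q_x$ between $Q_{\underline x}$ and $Q_{\overline x}$ with $\underline x,\overline x\in G_m$ and $\overline x-\underline x\le (1/m)\mathbf 1$; monotonicity of the counting measure and of the volume then gives
\begin{equation*}
D_N(\mathcal{P},x)\le \max_{y\in G_m} D_N(\mathcal{P},y) + \frac{d}{m}.
\end{equation*}
(The additive error $d/m$ comes from $\lambda^d(Q_{\overline x})-\lambda^d(Q_{\underline x})\le d/m$, since the two boxes differ only through $d$ coordinate slabs of width $1/m$.)

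Next, fix $y\in G_m$ and apply Theorem~\ref{Hoeffding} to the binary variables $T_i=\1_{Q_y}(p_i)$, which are $e^{\rho d}$-negatively dependent by assumption and have $\E[T_i]=\lambda^d(Q_y)$. With $S=\sum_i(T_i-\E T_i)=N\cdot(\tfrac1N|\mathcal{P}\cap Q_y|-\lambda^d(Q_y))$ we get, for any threshold $s>0$,
\begin{equation*}
\Prob\big(D_N(\mathcal{P},y)\ge s\big)=\Prob(|S|\ge Ns)\le 2e^{\rho d}\exp\!\big(-2Ns^2\big).
\end{equation*}
A union bound over the $|G_m|=(m+1)^d\le(2m)^d$ grid points yields
\begin{equation*}
\Prob\Big(\max_{y\in G_m}D_N(\mathcal{P},y)\ge s\Big)\le (2m)^d\cdot 2e^{\rho d}e^{-2Ns^2}
=2\exp\!\big(d\log(2m)+\rho d-2Ns^2\big).
\end{equation*}
Combining with the sandwich step, $D^*_N(\mathcal{P})\le s+d/m$ fails with at most this probability.

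It remains to choose $m$ and $s$. The natural choice balances the net error $d/m$ against the fluctuation scale $\sqrt{d/N}$, i.e. one takes $m\asymp \sqrt{N/d}$ (more precisely $m=\lceil\sqrt{N/(2d\log(6e))}\,\rceil$ or similar, which is where the constant $6e$ in $\eta$ originates), so that $d/m\lesssim\sqrt{d/N}$ and $\log(2m)\asymp\log(N/d)$. Substituting, the exponent becomes $d\log\eta+\rho d-2Ns^2$ up to the $\max\{1,\cdot\}$ truncation needed when $N\le d$ (forcing $m=O(1)$ and replacing $\log(N/d)$ by $1$, which produces the factor $\max\{1,\log(N/d)\}$ in \eqref{NegDepBoxC}). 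For \eqref{NegDepBoxTheta} one then solves for $s$ so that the failure probability equals $1-\theta$: setting $2Ns^2=d\log\eta+\rho d+\log\frac{2}{1-\theta}$ gives exactly $s=\sqrt{\tfrac{2}{N}}\sqrt{d\log\eta+\rho d+\log\frac{2}{1-\theta}}$, and one checks that the net error $d/m$ is absorbed into the stated constant. For \eqref{NegDepBoxC} one instead plugs $s=c\sqrt{\tfrac dN\xi}$, absorbs $d/m$ (valid since $d/m\le\sqrt{d\xi/N}$ for the chosen $m$, possibly after enlarging $c$ past its threshold), and reads off the exponent $-\tfrac12(c^2-1)\xi d+\rho d+d\log(2e(\tfrac2c+1))$, the $\tfrac12(c^2-1)\xi$ term reflecting that one unit of $c^2\xi$ is spent covering the discretization overhead.

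The main obstacle is the bookkeeping in the optimization of $m$: one must track how the net error $d/m$, the logarithmic term $\log(2m)$ in the union bound, and the target rate interact so that the final constants come out as the clean $\eta$ and $6e$ stated, and one must handle the two regimes $N\ge d$ and $N<d$ uniformly (hence the $\max\{1,\log(N/d)\}$). The probabilistic content — the sandwich inequality, the application of Theorem~\ref{Hoeffding}, the union bound — is routine; the delicate part is verifying that the additive discretization error is genuinely of lower or equal order than $\sqrt{d/N}$ for the chosen $m$ and can be swallowed into the stated constants without degrading the exponent, which is the reason the theorem carries the slightly awkward $\max\{1,\log(N/d)\}$ factor rather than a pure $\log(e+N/d)$.
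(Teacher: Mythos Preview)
Your overall strategy---discretize, apply the Hoeffding-type bound of Theorem~\ref{Hoeffding} pointwise, union bound---is exactly the paper's. The gap is in the discretization tool. Your naive grid $G_m=\{0,\tfrac1m,\dots,1\}^d$ has size $(m+1)^d$ and sandwich error $d/m$, so to achieve discretization error $\delta$ you need $m=d/\delta$ and hence a cover of size roughly $(d/\delta)^d$. The paper instead invokes the bracketing $\delta$-cover of Theorem~\ref{bracketing} (via Lemma~\ref{delta_approx}), which produces a cover of size at most $(2e)^d(1+\delta^{-1})^d$ with sandwich error $\delta$. For equal $\delta$ that is smaller than your grid by a factor of order $(d/(2e))^d$, and this factor is exactly what makes the stated constants work: after the union bound your exponent carries an unavoidable extra $d\log d$ term, so you would obtain only $D^*_N\lesssim\sqrt{(d/N)\log(dN)}$ rather than $\sqrt{(d/N)\log(N/d)}$, and neither the explicit $\eta=6e(\max(1,N/(2d\log(6e))))^{1/2}$ in \eqref{NegDepBoxTheta} nor the term $\log(2e(\tfrac2c+1))$ in the first probability bound can be reached.

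There is also an arithmetic slip in your optimization: with $m\asymp\sqrt{N/d}$ the net error is $d/m\asymp d\sqrt{d/N}=\sqrt{d^3/N}$, not $\sqrt{d/N}$ as you claim; to get $d/m\asymp\sqrt{d/N}$ you would need $m\asymp\sqrt{dN}$, and then $\log(2m)\asymp\tfrac12\log(dN)$, confirming the parasitic $\log d$. The fix is not to tune $m$ differently but to replace the uniform grid by the $\delta$-cover of Theorem~\ref{bracketing}; after that, the paper simply sets $\epsilon=2\delta$, writes the failure probability as $2e^{\rho d}\lvert\Gamma\rvert e^{-N\epsilon^2/2}$, and solves the resulting implicit inequality for $\delta$ (respectively plugs in $\epsilon=c\sqrt{(d/N)\xi}$) to obtain the stated constants.
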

The proof of Theorem \ref{main_theo} requires some preparation.
To ``discretize'' the star discrepancy, we define $\delta$--covers %and $\delta$--bracketing covers
as in~\cite{DGS05}: for any $\delta\in(0,1]$ a finite set $\Gamma$ of points in $[0,1)^d$ is called a \emph{$\delta$--cover} of $[0,1)^d$, if for every $y\in [0,1)^d$ there exist $x,z\in\Gamma\cup\{0\}$ such that $x\leq y\leq z$ and $\lambda^d([0,z])-\lambda^d([0,x])\leq\delta$. The number $\mathcal{N}(d,\delta)$ denotes the smallest cardinality of a
$\delta$--cover of $[0,1)^d$.

The following theorem was stated and proved in~\cite{Gne08a}.

\begin{theorem}
\label{bracketing}
For any $d\geq 1$ and $\delta\in(0,1]$ we have
\begin{equation*}
\mathcal{N}(d,\delta)\leq 2^d \frac{d^d}{d!}(\delta^{-1}+1)^d.
\end{equation*}
\end{theorem}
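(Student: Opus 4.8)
The plan is to prove this purely geometric covering statement \emph{constructively}: build an explicit $\delta$-cover $\Gamma$ of $[0,1)^d$ and count its points. Writing $V(y):=\lambda^d([0,y])=\prod_{i=1}^d y_i$, the first observation is that a finite set $\Gamma$ is a $\delta$-cover precisely when the axis-parallel grid it generates partitions $[0,1)^d$ into boxes $[x,z]$ (corners in $\Gamma\cup\{0\}$, with the top layer reaching volume $1$) each of which has \emph{volume oscillation} $V(z)-V(x)\le\delta$; hence $\mathcal{N}(d,\delta)$ is bounded by the number of boxes in any such volume-adapted partition. The elementary engine for estimating this oscillation is the telescoping identity
\[
V(z)-V(x)=\sum_{i=1}^d\Big(\prod_{j<i}z_j\Big)(z_i-x_i)\Big(\prod_{j>i}x_j\Big),\qquad 0\le x\le z,
\]
which exposes that the contribution of coordinate $i$ is weighted by the product of the other coordinates.

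A uniform tensor grid is hopeless here: already the box abutting the corner $(1,\dots,1)$ forces spacing $\sim\delta/d$ in every coordinate, yielding $(d/\delta)^d$ boxes and overshooting the target by a factor $d^d$. The construction must therefore be \emph{adaptive}, refining a coordinate only where the complementary coordinates are large (equivalently, where $\partial_iV=V(y)/y_i$ is large). Concretely I would begin from a fine dyadic grid and repeatedly \emph{merge} neighbouring cells as long as the merged box keeps oscillation $\le\delta$, or dually subdivide a box only when the identity above forces it. Near a point $y$ a box with side lengths $h_i$ has oscillation $\approx\sum_i h_i\,V(y)/y_i$, and by AM--GM its geometric volume $\prod_i h_i$ is largest under the constraint $\sum_i h_i V(y)/y_i\le\delta$ when the $d$ contributions are balanced, giving $\prod_i h_i\approx\delta^d/(d^d V(y)^{d-1})$.

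Counting boxes then amounts to $\int_{[0,1)^d}(\prod_i h_i)^{-1}\,dy\approx \tfrac{d^d}{\delta^d}\int_{[0,1)^d}V(y)^{d-1}\,dy$, and since $\int_{[0,1)^d}V^{d-1}=\big(\int_0^1 t^{d-1}\,dt\big)^d=d^{-d}$ the factor $d^d$ cancels and one recovers the correct order $\delta^{-d}$. To upgrade this heuristic to the stated bound with its precise constant I would discretise: index each box by its per-coordinate refinement levels $\mathbf{k}=(k_1,\dots,k_d)\in\Z_{\ge0}^d$ and note that the balance condition confines the admissible multi-indices to a simplex $\{\sum_i k_i\le m\}$, whose lattice-point count $\binom{m+d}{d}\le(m+d)^d/d!$ supplies exactly the factor $1/d!$ (the normalised simplex volume). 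The remaining $2^d(\delta^{-1}+1)^d$ then comes from admitting up to $\delta^{-1}+1$ levels per coordinate together with the two-sided bracketing (round $y$ down to $x$, up to $z$), one factor $2$ per coordinate.

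The main obstacle is extracting the \emph{exact} constant $2^d d^d/d!$ rather than merely the order $\delta^{-d}$: the AM--GM/integral heuristic has to be replaced by a rigorous lattice count over the simplex of admissible levels, and one must verify via the telescoping identity that every box of the adaptive partition simultaneously satisfies $V(z)-V(x)\le\delta$ in its worst position, while the discretisation artefacts---the additive $+1$ per coordinate and the boundary boxes whose upper corner tends to $1$---are absorbed into the $(\delta^{-1}+1)$ factors. A transparent but lossier alternative is the recursion $\mathcal{N}(d,\delta)\le\sum_\ell \mathcal{N}\big(d-1,\tfrac{\delta}{2s_{\ell+1}}\big)$, obtained by slicing the last coordinate at a grid $\{s_\ell\}$ of spacing $\le\delta/2$ and using the split $V(z)-V(x)=z_d\,[V(z')-V(x')]+(z_d-x_d)\prod_{j<d}x_j$ (here $z',x'$ denote the first $d-1$ coordinates); this is conceptually clean but, because each slice forces spacing $\sim\delta$ in the remaining coordinates, it inflates the powers of $2$, so it is the simplex-counting route that actually delivers the advertised constant.
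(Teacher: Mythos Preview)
The paper does not prove Theorem~\ref{bracketing}; it merely quotes the result and cites \cite{Gne08a} for the proof. So there is no in-paper argument to compare against, and your write-up must be judged as a standalone attempt.

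What you have written is a research plan, not a proof. The heuristics are sound: the telescoping identity for $V(z)-V(x)$ is the right tool; the AM--GM step and the integral $\int_{[0,1)^d}V^{d-1}=d^{-d}$ correctly predict the order $\delta^{-d}$; and you are right that the factor $1/d!$ should emerge from counting lattice points in a simplex of side $\approx d\delta^{-1}$. This is indeed the mechanism behind the proof in \cite{Gne08a}, so your instincts are aligned with the literature.

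But several steps are missing or muddled. First, you never actually \emph{define} the cover: ``start from a fine dyadic grid and merge'' is a procedure whose output is not specified, so there is nothing to verify the $\delta$-cover property against and nothing to count. Second, the simplex constraint $\sum_i k_i\le m$ is asserted, not derived; you would need to exhibit the boxes indexed by $(k_1,\dots,k_d)$, show via the telescoping identity that each genuinely has oscillation $\le\delta$, and prove that the admissible indices lie in that simplex. Third, your accounting of the constant is inconsistent: if the boxes are already constrained to a simplex giving $\binom{m+d}{d}\le (m+d)^d/d!$, then matching the target forces $m+d\approx 2d(\delta^{-1}+1)$, and there is no separate factor of ``$\delta^{-1}+1$ levels per coordinate'' to multiply in---the $(\delta^{-1}+1)^d$, the $2^d$, and the $d^d$ all come out of the single simplex count with the right $m$. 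Finally, your opening ``precisely when'' characterisation of $\delta$-covers as grids is false (a $\delta$-cover need not be a Cartesian grid, and a grid can be a $\delta$-cover without every cell having small oscillation), though the forward direction you actually need for an upper bound is fine. In short: right strategy, but the construction and the verification are not on the page.
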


Notice that due to Stirling's formula we have $d^d/d! \le e^d/\sqrt{2\pi d}$ and so the cardinality of the $\delta-$cover may be bounded from above by $(2e)^d(1 + \delta^{-1})^d.$
Furthermore, it is easy to verify that in the case $d=1$ the identity
\begin{equation}\label{cover_d=1}
\mathcal{N}(1,\delta) = \lceil \delta^{-1} \rceil
\end{equation}
is established with the help of the $\delta$-cover $\Gamma := \{1/ \lceil \delta^{-1} \rceil, 2/ \lceil \delta^{-1} \rceil, \ldots,1\}$.

With the help of $\delta$-covers the star discrepancy can be approximated in the following sense.

\begin{lemma}\label{delta_approx}
Let $P\subset [0,1)^d$ be an $N$-point set, $\delta >0$, and $\Gamma$ be a $\delta$-cover  of $[0,1)^d$. Then
\begin{equation*}
D^*_N(P) \le \max_{x\in \Gamma} D_N(P, [0,x)) + \delta.
\end{equation*}
\end{lemma}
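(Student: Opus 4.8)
The plan is to compare the discrepancy function at an arbitrary test point $y \in [0,1]^d$ with its values at the two neighbouring points of the $\delta$-cover that sandwich $y$, exploiting the monotonicity of the counting function and of the volume function under inclusion of anchored boxes. First I would fix $y \in [0,1]^d$ and pick $x, z \in \Gamma \cup \{0\}$ with $x \le y \le z$ and $\lambda^d([0,z)) - \lambda^d([0,x)) \le \delta$, as guaranteed by the definition of a $\delta$-cover (note that $\lambda^d([0,y]) = \lambda^d([0,y))$, so the closed/half-open distinction is immaterial for volumes). Since $[0,x) \subseteq [0,y) \subseteq [0,z)$, both $\tfrac1N|P \cap [0,x)| \le \tfrac1N|P \cap [0,y)| \le \tfrac1N|P \cap [0,z)|$ and $\lambda^d([0,x)) \le \lambda^d([0,y)) \le \lambda^d([0,z))$ hold.

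Next I would bound $D_N(P, [0,y))$ from above by splitting into the two one-sided estimates. For the upper side,
\[
\frac1N|P \cap [0,y)| - \lambda^d([0,y)) \le \frac1N|P \cap [0,z)| - \lambda^d([0,x)) = \Big(\frac1N|P \cap [0,z)| - \lambda^d([0,z))\Big) + \big(\lambda^d([0,z)) - \lambda^d([0,x))\big),
\]
which is at most $D_N(P,[0,z)) + \delta$. For the lower side, symmetrically,
\[
\lambda^d([0,y)) - \frac1N|P \cap [0,y)| \le \lambda^d([0,z)) - \frac1N|P \cap [0,x)| \le D_N(P,[0,x)) + \delta.
\]
Combining the two, $D_N(P,[0,y)) \le \max\{D_N(P,[0,x)), D_N(P,[0,z))\} + \delta \le \max_{x \in \Gamma} D_N(P,[0,x)) + \delta$, where I use the convention $D_N(P,[0,0)) = 0$ so that the cases $x = 0$ or $z = 0$ cause no trouble (the box $[0,0)$ is empty and has volume zero). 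Taking the supremum over $y \in [0,1]^d$ on the left yields the claimed inequality.

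There is no serious obstacle here; the only point requiring a little care is the bookkeeping around the artificial anchor $0$ in $\Gamma \cup \{0\}$ — one must check that when the sandwiching forces $x = 0$ (resp.\ $z = 0$, which can only happen if $y = 0$), the one-sided bound still reads correctly, and this is immediate since $|P \cap [0,0)| = 0 = \lambda^d([0,0))$. The rest is just the monotonicity of $A \mapsto |P \cap A|$ and $A \mapsto \lambda^d(A)$ under set inclusion together with the defining volume gap $\le \delta$ of the cover.
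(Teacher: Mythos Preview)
Your proof is correct and is exactly the standard sandwiching argument the paper has in mind: the paper does not spell out a proof at all but simply calls it ``straightforward'' and refers to \cite[Lemma~3.1]{DGS05}, where precisely this monotonicity argument is carried out. The only cosmetic point is that the $\delta$-cover is defined for $y \in [0,1)^d$ while the star discrepancy is a supremum over $[0,1]^d$; this is harmless since for boundary points one approximates from below and uses that $P$ is finite, but you may want to add a one-line remark to that effect.
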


The proof of Lemma \ref{delta_approx} is straightforward, cf., e.g., \cite[Lemma~3.1]{DGS05}.

Now we are ready to prove Theorem \ref{main_theo}.
\begin{proof}
For $\delta \in (0,1)$ to be chosen later let $\Gamma$ be a $\delta-$cover consisting of at most $(2e)^d(1 + \delta^{-1})^d$ elements. Such a $\Gamma$ exists due to Theorem \ref{bracketing} and discussion thereafter.

Define
$$D^{*}_{N, \Gamma}(\mathcal{P}) = \max_{\beta \in \Gamma} \left|\lambda^d([0, \beta)) - \frac{1}{N} \sum_{j = 1}^N \1_{[0, \beta)}(p_j)\right|.$$
Now Lemma \ref{delta_approx} gives us
$$D^{*}_{N}(\mathcal{P}) \leq D^{*}_{N,\Gamma}(\mathcal{P}) + \delta. $$
For every $\beta \in \Gamma$ and $j \in [N]$ put
$$ \xi_{\beta}^{(j)} = \lambda^d([0, \beta)) - \1_{[0, \beta)}(p_j).$$
Let $\epsilon = 2\delta.$ Due to Hoeffding's inequality applied to random variables $(\xi_{\beta}^{(j)})_{j = 1}^N$ (applicable since $(p_j)_{j = 1}^N$ is $e^{\rho d}$ - negatively dependent) we obtain for every $\beta \in \Gamma$
$$\Prob\left(\left|\frac{\sum_{j = 1}^{N} \xi_{\beta}^{(j)} }{N}   \right| \geq \delta  \right) \leq 2e^{\rho d} e^{-2N\delta^2}.$$
With the help of a simple union bound we get
\begin{align}
& \Prob(D^{*}_{N}(\mathcal{P}) < \epsilon) = 1 - \Prob(D^{*}_{N}(\mathcal{P}) \geq \epsilon) \nonumber
\\
& \geq 1 - \Prob(D^{*}_{N,\Gamma}(\mathcal{P}) \geq \epsilon - \delta) = 1 - \Prob\left(\max_{\beta \in \Gamma} \left| \frac{\sum_{j = 1}^N\xi_{\beta}^{(j)} }{N}    \right| \geq \delta \right) \nonumber
\\
& \geq 1 - 2e^{\rho d} |\Gamma| e^{-\frac{N}{2} \epsilon^2}. \label{HoeffdingUnionBound}
\end{align}

Using the above we would like to find a bound on discrepancy of the sampling scheme $\mathcal{P}$ which holds with probability at least $\theta \in (0,1).$ We are looking for $\epsilon_{\theta}$ such that
\begin{equation}\label{thetaBound}
\Prob(D^{*}_{N}(\mathcal{P}) < \epsilon_{\theta}) \geq \theta.
\end{equation}
Put $\epsilon_{\theta} = C_{\theta} (\frac{d}{N} \log(1 + \frac{N}{d}))^{\frac{1}{2}} = 2\delta_{\theta}.$ Inequality (\ref{thetaBound}) holds true if $$\delta_{\theta} \geq \left( \frac{1}{2N} \right)^{\frac{1}{2}} \left[ \log(|\Gamma|) + \rho d + \log\left(\frac{2}{1-\theta}  \right)  \right]^{\frac{1}{2}}.$$
Our problem boils now down to finding possibly small $\delta_{\theta} \in (0,1)$ for which
\begin{equation}\label{thetaBound2}
\delta_{\theta} \geq  \left( \frac{1}{2N} \right)^{\frac{1}{2}} \left[ d\log\left(2e\left[1 + \delta_{\theta}^{-1}\right]\right) + \rho d + \log\left(\frac{2}{1-\theta}  \right)  \right]^{\frac{1}{2}}.
\end{equation}
Specifying $\delta_{\theta}$ to be of the form
$$\delta_{\theta} = \left( \frac{1}{2N} \right)^{\frac{1}{2}} \left[ d\log(\eta) + \rho d + \log\left(\frac{2}{1-\theta}  \right)  \right]^{\frac{1}{2}}$$
we get that (\ref{thetaBound2}) is satisfied if
$$\eta \geq 2e\left(1 + \delta_{\theta}^{-1}\right).$$
Expanding $\delta_{\theta}$ in dependence of $\eta$ it suffices to find $\eta$ for which
$$\left(\frac{\eta}{2e} - 1 \right)\log(\eta)^{\frac{1}{2}} \geq \left(\frac{2N}{d}  \right)^{\frac{1}{2}}$$
and one easily sees that this is satisfied for $\eta$ given in the statement of the theorem. To prove \ref{NegDepBoxC} one only needs to plug in $\epsilon = c \sqrt{\frac{d}{N}}$ into (\ref{HoeffdingUnionBound}) and then consider the two cases $\xi = 1$ and $\xi = \log\left( \frac{N}{d}  \right)$ separately. 
\end{proof}

\subsection{Bound on the Weighted Star Discrepancy for $\mathcal{D}_0^d-\gamma$-negatively Dependent Sampling Schemes.}
One of the reasons why the QMC integration may be successfully applied in many high-dimensional problems is the fact that quite often only a small number of coordinates is really important. This observation led to the introduction of weighted function spaces and weighted discrepancies by Sloan and Wo\'zniakowski in \cite{SW98}. The above concepts are closely related to the theory of weighted spaces of Sobolev type, in particular the integration error in those spaces obeys a Koksma-Hlawka type upper bound, which may be phrased using the norm of the function and the weighted star discrepancy.

By weights we understand a set of non-negative numbers $\gamma = (\gamma_u)_{u \in [d] \setminus \emptyset},$ where $\gamma_u$ is interpreted as the weight of the coordinates from $u.$ Let $|u|$ denote the cardinality of $u.$ For $x \in [0,1]^d$ we write $(x(u), 1)$ to denote the point in $[0,1]^d$ agreeing with $x$ on the coordinates from $u$ and having all the other coordinates set to $1.$

The weighted star discrepancy of a point set $X = (x_1, \ldots, x_N)$ and weights $\gamma$ is defined by
$$D_{N,\gamma}^*(X) := \sup_{z \in [0,1]^d} \max_{u \in [d] \setminus \emptyset} \gamma_u |D_N(X,(z(u),1)|.$$

The following theorem is similar in flavor to the Theorem $1$ from \cite{Ais11}.

\begin{theorem}
Let $N,d \in \mathbb{N}$ and let $\mathcal{P} = (p_j)_{j = 1}^N \subset [0,1)^d$ be a sampling scheme, such that for every $\emptyset \neq u \subset [d]$ its projection on the  coordinates in $u$ is $\mathcal{D}_0^{|u|}$ - $e^{\rho |u|}$- negatively dependent.
Then for any weights $(\gamma_u)_{u \subset [d] \setminus \emptyset}$ and any $c > 0$ it holds
\begin{equation}\label{NegDepWeightC}
D^*_{N, \gamma}(\mathcal{P}) \leq \max_{\emptyset \neq u \subset [d]} c \gamma_u \sqrt{\frac{|u|}{N}}
\end{equation}
with probability at least $2-(1+e^{-(1.674c^2 -10.7042 - \rho)})^d.$
Moreover, for $\theta \in (0,1)$ it holds
\begin{equation}\label{NegDepWeghtTheta}
\Prob\left(D^*_{N, \gamma}(\mathcal{P}) \leq \max_{\emptyset \neq u \subset [d]}  \gamma_u \sqrt{\tfrac{|\rho + 10.7 + \log((2-\theta)^{\tfrac{1}{d}} - 1 )|}{1.674}} \sqrt{\frac{|u|}{N}} \right) \geq \theta.
\end{equation}
\end{theorem}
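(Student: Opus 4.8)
The plan is to reduce the weighted star discrepancy to a maximum, over nonempty coordinate sets $u\subseteq[d]$, of the (unweighted) star discrepancy of the projected point set $\mathcal{P}_u$ onto the coordinates in $u$, and then to apply Theorem~\ref{main_theo_GH} to each such projection. The crucial observation is that $D_N(\mathcal{P},(z(u),1))$ depends on $z$ and $\mathcal{P}$ only through the coordinates in $u$: for $z\in[0,1]^d$ the box $[0,(z(u),1))$ meets $\mathcal{P}$ in exactly the points whose $u$-projection lies in $[0,z|_u)\subset[0,1)^{|u|}$, and its volume equals $\lambda^{|u|}([0,z|_u))$. Hence $\sup_{z}\gamma_u|D_N(\mathcal{P},(z(u),1))| = \gamma_u D^*_N(\mathcal{P}_u)$, and therefore
\begin{equation*}
D^*_{N,\gamma}(\mathcal{P}) = \max_{\emptyset\neq u\subseteq[d]} \gamma_u\, D^*_N(\mathcal{P}_u).
\end{equation*}
By hypothesis each $\mathcal{P}_u$ is a $\mathcal{D}_0^{|u|}$-$e^{\rho|u|}$-negatively dependent sampling scheme in $[0,1)^{|u|}$, so Theorem~\ref{main_theo_GH} applies to it with $d$ replaced by $|u|$.

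First I would fix $c>0$ and apply \eqref{disc_bound_GH}: for each nonempty $u$ the event $E_u:=\{D^*_N(\mathcal{P}_u)\le c\sqrt{|u|/N}\}$ has probability at least $1-e^{-(1.6741c^2-10.7042-\rho)|u|}$, hence the complement has probability at most $q^{|u|}$ where $q:=e^{-(1.6741c^2-10.7042-\rho)}$. On $\bigcap_u E_u$ we get $D^*_{N,\gamma}(\mathcal{P})\le\max_u \gamma_u c\sqrt{|u|/N}$, which is exactly \eqref{NegDepWeightC}. A union bound over the $2^d-1$ nonempty subsets, grouped by cardinality $k$ (there are $\binom{d}{k}$ of them, each contributing failure probability $\le q^k$), gives
\begin{equation*}
\Prob\!\left(\bigcup_{u} E_u^{\,c}\right) \le \sum_{k=1}^{d}\binom{d}{k} q^{k} = (1+q)^d - 1,
\end{equation*}
so the good event has probability at least $1-((1+q)^d-1) = 2-(1+q)^d$, matching the stated bound (with the slightly rounded constant $1.674$). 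For the $\theta$-statement \eqref{NegDepWeghtTheta} I would instead feed \eqref{disc_bound_AH+} into the same grouping: to make the simultaneous success probability at least $\theta$ it suffices to make each individual failure probability at most something summing appropriately; the clean choice is to require $(1+q)^d\le 2-\theta$, i.e. $q\le(2-\theta)^{1/d}-1$, and then solve $1.6741 c^2 - 10.7042 - \rho = -\log q = -\log((2-\theta)^{1/d}-1)$ for $c$, which yields $c=\sqrt{(\rho+10.7042+\log((2-\theta)^{1/d}-1)^{-1})/1.6741}$, i.e. the quantity inside the square root of \eqref{NegDepWeghtTheta} (the absolute value and the rounded constants $10.7$, $1.674$ absorb minor bookkeeping). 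Plugging this $c$ back into the per-projection bound $\gamma_u D^*_N(\mathcal{P}_u)\le \gamma_u c\sqrt{|u|/N}$ and taking the max over $u$ gives the claim.

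The main obstacle, and the step deserving the most care, is the reduction identity $D^*_{N,\gamma}(\mathcal{P}) = \max_u \gamma_u D^*_N(\mathcal{P}_u)$ together with verifying that Theorem~\ref{main_theo_GH} is genuinely applicable to each projection: one must check that $\mathcal{P}_u$ is itself a sampling scheme (uniform marginals and exchangeability are inherited from $\mathcal{P}$ under coordinate projection) and that the negative-dependence hypothesis is stated exactly for the class $\mathcal{D}_0^{|u|}$ on which Theorem~\ref{main_theo_GH} operates — which is why the theorem hypothesis is phrased in terms of all projections being $\mathcal{D}_0^{|u|}$-$e^{\rho|u|}$-negatively dependent. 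The remaining work — the binomial union bound and solving for $c$ — is routine, the only subtlety being the monotone bookkeeping needed to arrive at the precise rounded constants in \eqref{NegDepWeightC}–\eqref{NegDepWeghtTheta}.
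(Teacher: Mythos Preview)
Your proposal is correct and follows essentially the same argument as the paper: reduce $D^*_{N,\gamma}(\mathcal{P})$ to $\max_u \gamma_u D^*_N(\mathcal{P}_u)$, apply Theorem~\ref{main_theo_GH} to each projection, and combine via the binomial union bound $\sum_{\nu=1}^d \binom{d}{\nu} q^\nu = (1+q)^d-1$. In fact you spell out both the reduction identity and the derivation of \eqref{NegDepWeghtTheta} more explicitly than the paper, which leaves the latter as ``simple calculations.''
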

\begin{proof}
We shall only prove the statement (\ref{NegDepWeightC}), the statement (\ref{NegDepWeghtTheta}) follows then by simple calculations.
For $\emptyset \neq u \subset [d]$ and $c > 0$ put
$$A_u = \{\omega \in \Omega : D^{*}_{N}(X^u(\omega))  > c \sqrt{\tfrac{|u|}{N}} \}.$$
Here $X^u$ denotes the projection of $X$ on the coordinates from $u.$
By Theorem \ref{main_theo_GH} it holds
\begin{align*}
\Prob(A_u) < e^{-(1.674c^2 - 10.7042 - \rho)|u|}.
\end{align*}
Now
\begin{align*}
& \Prob\left(D^*_{N, \gamma}(\mathcal{P}) > \max_{\emptyset \neq u \subset [d]} c \gamma_u \sqrt{\frac{|u|}{N}}\right) \leq \Prob(\bigcup_{\emptyset \neq u \subset [d]} A_u)
\\
& < \sum_{\nu = 1}^{d} \binom{d}{\nu}  e^{-(1.674c^2 -10.7042 - \rho) \nu}
\\
& = (1+e^{-(1.674c^2 -10.7042 - \rho)})^d - 1.
\end{align*}

\begin{comment}
Also the second statement follows, because by substituting in the above $c = \sqrt{\frac{-\log(2^{\frac{1}{d}} - 1) +10.7 + \rho}{1.674}}$ we get
$$\Prob\left(D^*_{N, \gamma}(\mathcal{P}) > \max_{\emptyset \neq u \subset [d]} c \gamma_u \sqrt{\frac{|u|}{N}}\right) < (1+e^{\log(2^{\frac{1}{d}} - 1)})^d - 1 = 1. $$
\end{comment}
\end{proof}

\section{Examples of Negatively Dependent and Pairwise Negatively Dependent Sampling Schemes}
\label{SECT4}

Many sampling schemes, such as randomly shifted and jittered rank-1 lattices (cf. Section \ref{R1L}) and Latin hypercube sampling (cf. Section \ref{LHS}), are multidimensional generalizations of the one-dimensional \emph{simple stratified sampling}. Simple startified sampling is defined in the following way: let $\pi$ be a uniformly chosen permutation of $\{1, \ldots, N\}$ and let $(U_j)_{j = 1}^N$ be independent random variables distributed uniformly on $(0,1].$ Moreover, $\pi$ is independent of $(U_j)_j.$ We put
$$p_j := \frac{\pi(j) - U_j}{N}, j = 1, \ldots, N. $$
Effectively, one is considering the partition $I_j := [\tfrac{j-1}{N}, \tfrac{j}{N}), j = 1, \ldots, N,$ of the unit interval and in every element of the partition putting one point, independently of all the other points. The simple lemma is a useful tool for our investigations and may be found e.g. in \cite{WG19}.

\begin{lemma}\label{SSSpND}
Simple stratified sampling $\mathcal{P} = (p_j)_{j = 1}^N$ is pairwise negatively dependent.
\end{lemma}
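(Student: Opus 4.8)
The plan is to verify the two defining inequalities \eqref{pNLOD} and \eqref{pNUOD} directly; by the remark following the definition it suffices to check only one of them, say \eqref{pNLOD}. Fix $Q = [a,1)$ and $R = [b,1)$ in $\mathcal{C}_1^d$. Since $d = 1$ here (simple stratified sampling is a one-dimensional scheme), we have $Q = [a,1)$, $R = [b,1)$ with $a,b \in [0,1)$, and we must show $\Prob(p_1 \geq a,\, p_2 \geq b) \leq \Prob(p_1 \geq a)\,\Prob(p_2 \geq b)$. Because each $p_j$ is uniform on $[0,1)$, the right-hand side equals $(1-a)(1-b)$.

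First I would set up the computation of the left-hand side by conditioning on the random permutation $\pi$. Write $p_1 = (\pi(1) - U_1)/N$ and $p_2 = (\pi(2) - U_2)/N$, where $(U_1, U_2)$ are i.i.d.\ uniform on $(0,1]$ and independent of $\pi$. Conditionally on the values $\pi(1) = k$ and $\pi(2) = \ell$ (an ordered pair of distinct elements of $[N]$, each such pair equally likely with probability $1/(N(N-1))$), the points $p_1$ and $p_2$ become \emph{independent}, with $p_1$ uniform on $I_k = [(k-1)/N, k/N)$ and $p_2$ uniform on $I_\ell$. Let $g(a) := \Prob(p_1 \geq a \mid p_1 \in I_k)$, a non-increasing step-like function of $k$ for fixed $a$: it is $0$ if $k/N \le a$, it is $1$ if $(k-1)/N \ge a$, and it is the intermediate fractional value $k - aN$ otherwise. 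Then
\begin{equation*}
\Prob(p_1 \geq a,\, p_2 \geq b) = \frac{1}{N(N-1)} \sum_{k \neq \ell} g_a(k)\, g_b(\ell),
\end{equation*}
where $g_a(k) := \max\{0, \min\{1, k - aN\}\}$ and similarly for $g_b$. Meanwhile $\Prob(p_1 \ge a) = \frac1N \sum_{k=1}^N g_a(k) = 1-a$ and likewise $\frac1N\sum_\ell g_b(\ell) = 1-b$.

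The key inequality then reduces to the elementary estimate
\begin{equation*}
\frac{1}{N(N-1)} \sum_{k \neq \ell} g_a(k)\, g_b(\ell) \;\le\; \Bigl(\frac1N \sum_k g_a(k)\Bigr)\Bigl(\frac1N \sum_\ell g_b(\ell)\Bigr),
\end{equation*}
i.e., after clearing denominators, $N \sum_{k\neq\ell} g_a(k) g_b(\ell) \le (N-1)\bigl(\sum_k g_a(k)\bigr)\bigl(\sum_\ell g_b(\ell)\bigr)$. Writing $\sum_{k\neq\ell} g_a(k)g_b(\ell) = \bigl(\sum_k g_a(k)\bigr)\bigl(\sum_\ell g_b(\ell)\bigr) - \sum_k g_a(k)g_b(k)$, this is equivalent to $\sum_k g_a(k) g_b(k) \ge \frac1N \bigl(\sum_k g_a(k)\bigr)\bigl(\sum_k g_b(k)\bigr)$, which is exactly Chebyshev's sum inequality (the correlation inequality for similarly ordered sequences): since $g_a$ and $g_b$ are both non-decreasing in $k$, their normalized average product dominates the product of their averages. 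This is the heart of the argument, and the only mildly delicate point is confirming the monotonicity of $k \mapsto g_a(k)$ and handling the boundary cases $aN \in \mathbb{Z}$ cleanly; everything else is bookkeeping. Finally, exchangeability of $\mathcal{P}$ (here $\pi$ is a uniform permutation, so the law is invariant under relabeling) justifies reducing the pairwise condition to the pair $(p_1, p_2)$, and the uniform marginals are immediate from the construction, so $\mathcal{P}$ is indeed a sampling scheme and the proof is complete.
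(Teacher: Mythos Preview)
Your argument is correct. The paper itself does not supply a proof of this lemma; it merely states the result and refers the reader to \cite{WG19}. Your reduction to Chebyshev's sum inequality via the conditioning on $(\pi(1),\pi(2))$ and the observation that $k\mapsto g_a(k)$ and $k\mapsto g_b(k)$ are both non-decreasing is a clean and standard route, and all the bookkeeping (the identity $\sum_{k\neq\ell}=\bigl(\sum_k\bigr)\bigl(\sum_\ell\bigr)-\sum_k$ on the diagonal, the computation of the marginals) is handled correctly.
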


%\end{proof}
\subsection{Negative Dependence  of Generalized Stratified Sampling}\label{GSS}
We partition $[0,1)^d$ into $\beta \geq N$ sets $(B_j)_{j = 1}^{\beta}$ with $\lambda^d(B_j) = \frac{1}{\beta}, j = 1, \ldots, \beta.$ Let $Y=(Y_1, \ldots, Y_{\beta})$ be a random vector distributed uniformly on
$$\{ (v_1,\ldots, v_{\beta}) \in\{0,1\}^{\beta} : \sum_{j = 1}^{\beta} v_j = N\}.$$
Given the value of $Y$ we place one point for each $j \in [\beta]$ with $Y_j = 1$ uniformly and independently of all other points inside $B_j.$ Symmetrizing this construction yields a sampling scheme $\mathcal{P} = (p_j)_{j = 1}^N,$ which we call \emph{generalized stratified sampling} (note that every single $p \in \mathcal{P}$ is uniformly distributed in $[0,1)^d)$ . Here ``generalized''  has to be understood in the sense that there are possibly more strata then points.
\begin{example}
 There are many natural choices for the strata. The simplest one would be stripes of the form $B_j, j = 1, \ldots, N,$ with $B_j := [\tfrac{j-1}{N}, \tfrac{j}{N})\times [0,1)^{d-1}.$ However, one could also choose, e.g., elementary cells (i.e., fundamental parallelepipeds) of a rank-1 lattice (cf. \cite{LL00}), see Figure \ref{r1lStrata}. 
\begin{figure}
\begin{center}
\begin{tikzpicture}
 \draw(0,0)--(0,5);
 \draw(0,0)--(5,0);
 \draw(5,0)--(5,5);
 \draw(0,5)--(5,5);
 
  \draw(5/2,0)--(5,5);
  \draw(0,0)--(5/2,5);
  \draw(0,5/2)--(5,0);
  \draw(0,5)--(5,5/2);
\fill[gray] (4,3)--(5,5)--(5,5/2);
\fill[gray]  (0,5/2)--(1,2)--(2,4);
\fill[gray] (0,5/2)--(2,4)--(0,5);
\filldraw
(1/3,1.5) circle (2pt); 
\filldraw
(3,5/2) circle (2pt);
\filldraw
(7/2,1/2) circle (2pt); 
\filldraw
(37/20,97/20) circle (2pt); 
\filldraw
(4.7,2.8) circle (2pt); 
\end{tikzpicture}
\end{center}
\caption{$N$ elementary cells of a rank-1 lattice as strata, $\beta = N = 5$.}
\label{r1lStrata}
\end{figure}
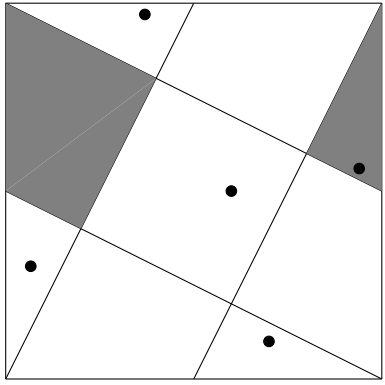
 
\end{example}
To show that generalized stratified sampling is negatively dependent we need first a simple lemma.
\begin{lemma}\label{MaxLemma}
Let $t, N \in \mathbb{N}, t \leq N, \xi \geq 0$ and let
$$D = \{x= (x_1, \ldots, x_N) \in \mathbb{R}^N \, | \, x \geq 0, \sum_{i = 1}^{N}x_i = \xi \}.$$
The function
$$f : D \rightarrow \mathbb{R}, (x_1, \ldots, x_N) \mapsto \sum_{J \subset [N], |J| = t} \prod_{j \in J} x_j $$
takes on its maximum in the point $(x_1, \ldots, x_N) = (\frac{\xi}{N}. \ldots, \frac{\xi}{N}).$
\end{lemma}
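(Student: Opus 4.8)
The plan is to prove the maximum of the elementary symmetric polynomial $e_t$ on the simplex $D$ is attained at the barycenter by a symmetrization (smoothing) argument. First I would note that $D$ is compact and $f = e_t$ is continuous, so a maximum exists; let $x = (x_1,\ldots,x_N)$ be any maximizer. The key claim is that at a maximizer all coordinates must be equal. Suppose not, say $x_1 \neq x_2$. I would fix all other coordinates $x_3,\ldots,x_N$ and view $f$ as a function of the pair $(x_1,x_2)$ subject to $x_1 + x_2 = s$ (where $s = x_1 + x_2$ is fixed) and $x_1,x_2 \geq 0$. Writing $e_t$ by separating the terms according to whether each of the indices $1,2$ appears in $J$, one gets
\begin{equation*}
f = x_1 x_2 \, A + (x_1 + x_2)\, B + C = x_1 x_2\, A + sB + C,
\end{equation*}
where $A = e_{t-2}(x_3,\ldots,x_N)$, $B = e_{t-1}(x_3,\ldots,x_N)$, $C = e_t(x_3,\ldots,x_N)$ are nonnegative and independent of how $s$ is split between $x_1$ and $x_2$. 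Since $A \geq 0$ and $x_1 x_2$ is strictly maximized over $\{x_1 + x_2 = s,\ x_1,x_2 \geq 0\}$ at $x_1 = x_2 = s/2$, replacing $(x_1,x_2)$ by $(s/2,s/2)$ does not decrease $f$ (and the new point is still in $D$). Iterating this pairwise averaging — always equalizing a pair of unequal coordinates — one shows that the value $f(\tfrac{\xi}{N},\ldots,\tfrac{\xi}{N})$ is at least as large as $f(x)$ for the chosen maximizer, hence equals the maximum.

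To make the iteration rigorous rather than hand-wavy, I would instead argue directly: for \emph{any} $x \in D$, averaging coordinates $1$ and $2$ as above produces $x' \in D$ with $f(x') \geq f(x)$; more generally, for any pair of indices the analogous averaging step is nondecreasing. A clean way to finish is to invoke a standard fact: a continuous function on a compact convex set that is Schur-concave (equivalently, symmetric and invariant-nondecreasing under such pairwise averagings, i.e. under Robin Hood transfers) attains its maximum at the most ``balanced'' point, which on $D$ is the barycenter $(\tfrac{\xi}{N},\ldots,\tfrac{\xi}{N})$. Alternatively, and perhaps more self-contained, I would take a maximizer $x^*$, and if its coordinates are not all equal, pick a pair that differs, average them to get another maximizer with strictly smaller variance $\sum_i x_i^2$ (the averaging strictly decreases $\sum_i x_i^2$ while not decreasing $f$); among all maximizers choose one minimizing $\sum_i x_i^2$ — it cannot have two unequal coordinates, so it is the barycenter, and therefore $f$ at the barycenter equals the global maximum.

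The only genuine content is the two-variable computation showing $f = x_1 x_2 A + sB + C$ with $A = e_{t-2}(x_3,\ldots,x_N) \geq 0$, together with the elementary observation that $x_1 x_2$ is maximized at the midpoint of the segment $x_1 + x_2 = s$; everything else is bookkeeping. One small point to handle carefully is the edge cases $t = N$ (where $f$ is just $\prod_i x_i$ and the claim is the AM--GM inequality / the midpoint statement directly) and small $N$ or $t \leq 1$ (where $f$ is constant or linear on $D$ and the statement is trivial), as well as the convention $e_j \equiv 0$ for $j < 0$ so that the decomposition makes sense when $t \leq 1$. I do not expect a serious obstacle here; the main thing to get right is simply phrasing the pairwise-averaging argument so that it cleanly yields a global statement rather than only a local critical-point condition, which the ``minimize $\sum_i x_i^2$ among maximizers'' trick accomplishes.
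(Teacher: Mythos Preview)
Your argument is correct and takes a genuinely different route from the paper. The paper proceeds by induction on $N\ge t$: the base case $N=t$ is handled by Lagrange multipliers, and in the inductive step one fixes $x_N$, uses the induction hypothesis to equalize $x_1,\ldots,x_{N-1}$, and then optimizes over $x_N$ via a one-variable calculus computation. Your approach instead exploits the Schur-concavity of $e_t$ through a two-variable smoothing step: the decomposition $f = x_1x_2\,A + sB + C$ with $A=e_{t-2}(x_3,\ldots,x_N)\ge 0$ shows that equalizing any pair of coordinates never decreases $f$, and the ``minimize $\sum_i x_i^2$ among maximizers'' device turns this local observation into a clean global conclusion. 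Your proof is shorter, avoids both Lagrange multipliers and derivative computations, and makes transparent why the symmetric point wins; the paper's induction, on the other hand, is more pedestrian but entirely explicit and does not rely on the compactness-of-the-maximizer-set step. Either way the edge cases you flag ($t=1$, $t=N$, and the conventions $e_{-1}=0$, $e_0=1$) are harmless.
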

\begin{proof}
We shall prove the statement by induction on $N \geq t.$ The case $N = t$ is straightforward by Lagrange multipliers theorem. Suppose we have already shown the statement for $N-1$ and we would like to prove it for $N.$ Firstly let us fix the value of $x_N \in (0, \xi).$ It holds
\begin{align*}
& \sum_{J \subset [N], |J| = t} \prod_{j \in J} x_j = \sum_{J \subset [N], |J| = t, N \in J} \prod_{j \in J} x_j +  \sum_{J \subset [N], |J| = t, N \not\in J} \prod_{j \in J} x_j
\\
& = x_N \sum_{J' \subset [N-1], |J'| = t-1} \prod_{j \in J'} x_j + \sum_{J' \subset [N-1], |J'| = t} \prod_{j \in J'} x_j.
\end{align*}
By the induction assumption for a fixed value of $x_N$ the last term is maximal when for $j = 1,\ldots, N-1$ we have $x_j = \frac{\eta}{N-1},$ where we put $\eta = \xi - x_N.$  Plugging it into the above formula we obtain
$$ \sum_{J \subset [N], |J| = t} \prod_{j \in J} x_j = (\xi - \eta)\binom{N-1}{t-1}\left(\frac{\eta}{N-1}\right)^{t-1} + \binom{N-1}{t} \left(\frac{\eta}{N-1}\right)^t,$$
which we need to maximize with respect to $\eta.$
It holds
$$ \sum_{J \subset [N], |J| = t} \prod_{j \in J} x_j = Ch(\eta),$$
where $C = \frac{(N-1)!}{(t-1)!(N-t)!(N-1)^{t-1}}$ and $h(\eta) = \xi \eta^{t-1} + \left(\frac{N-t}{t(N-1)} -1 \right)\eta^t.$
Now we have
$$h'(\eta) = \eta^{t-2}\left[(t-1)\xi + \left(\frac{N-t}{N-1} - t \right)\eta \right].$$
The derivative vanishes for $t \geq 3$ at $\eta_1 = 0$ and $\eta_2 = \frac{N-1}{N} \xi.$ Since $h(\eta_2) > \max\{ h(0), h(\xi) \}$ and $\eta_2$ is a local maximum the claim follows.
\end{proof}

\begin{theorem}\label{GSSThm}
Let $\mathcal{P} = (p_j)_{j = 1}^N$ be a generalized stratified sampling as described above and $A \subset [0,1)^d$ be measurable. Then for every $1 \leq t \leq N$ it holds
$$\Prob(\bigcap_{j = 1}^t \{ p_j \in A\}) \leq \prod_{j = 1}^t \Prob(p_j \in A).$$
 In particular, generalized stratified sampling is $\mathcal{S}$ - negatively dependent for any system $\mathcal{S}$ of measurable subsets of $[0,1)^d$.
\end{theorem}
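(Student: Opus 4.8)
The plan is to condition on the value of the random vector $Y=(Y_1,\ldots,Y_\beta)$ and exploit the fact that, given $Y$, the points lying in distinct strata are placed independently. Since the strata $B_1,\ldots,B_\beta$ all have Lebesgue measure $1/\beta$, writing $a_j := \lambda^d(A\cap B_j)$ we have $\lambda^d(A) = \sum_{j=1}^\beta a_j$, and for a single point $p$ placed uniformly in $B_j$ the probability that it falls in $A$ equals $\beta a_j$. First I would deal with the symmetrization: because $\mathcal{P}$ is obtained by applying a uniformly random independent permutation, exchangeability gives $\Prob(\bigcap_{j=1}^t\{p_j\in A\}) = \Prob(\bigcap_{j\in u}\{p_j\in A\})$ for every $u\subseteq[\beta]$ with $|u|=t$, and this common value equals the average over all $t$-subsets of the pre-symmetrized quantities; so it suffices to bound the average of $\Prob(\text{the points sitting in strata } i_1<\cdots<i_t \text{ all land in } A)$ over the (random) set of occupied strata.

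Next I would carry out the conditional computation. Given $Y$, let $S=\{j: Y_j=1\}$, a uniformly random $N$-subset of $[\beta]$; the $N$ points are independent, the one in stratum $j\in S$ landing in $A$ with probability $\beta a_j$. Hence the probability that the first $t$ of them (in the pre-symmetrized labelling) land in $A$, averaged appropriately, reduces to computing
\[
\Prob\Big(\bigcap_{j=1}^t\{p_j\in A\}\Big)
= \frac{1}{\binom{\beta}{N}}\sum_{\substack{S\subseteq[\beta]\\ |S|=N}} \frac{1}{\binom{N}{t}}\sum_{\substack{J\subseteq S\\ |J|=t}} \prod_{j\in J}\beta a_j,
\]
which, after swapping the order of summation, is a positive constant times $\sum_{J\subseteq[\beta],|J|=t}\prod_{j\in J} a_j$ (each $t$-subset $J$ of $[\beta]$ is contained in the same number $\binom{\beta-t}{N-t}$ of $N$-subsets $S$). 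Meanwhile $\prod_{j=1}^t\Prob(p_j\in A) = \lambda^d(A)^t = \big(\sum_{j=1}^\beta a_j\big)^t$. So the claim becomes a purely combinatorial inequality: with $\xi=\sum_{j=1}^\beta a_j$ fixed, the elementary symmetric sum $e_t(a_1,\ldots,a_\beta)=\sum_{|J|=t}\prod_{j\in J}a_j$, multiplied by the appropriate normalizing constant, is at most $\xi^t$.

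The main obstacle is precisely this last inequality, and here is where Lemma~\ref{MaxLemma} enters: it tells us that $e_t$ on the simplex $\{a\ge 0,\ \sum a_j=\xi\}$ (now with $\beta$ variables rather than $N$) is maximized at the barycenter $a_j=\xi/\beta$, giving $e_t \le \binom{\beta}{t}(\xi/\beta)^t$. One then only has to check that $\binom{\beta}{t}(\xi/\beta)^t$, times the normalization coming from the averaging identity above, does not exceed $\xi^t$ — this amounts to the elementary fact $\binom{\beta}{t}/\beta^t \le 1$ after the $\binom{\beta-t}{N-t}$, $\binom{\beta}{N}$, $\binom{N}{t}$ factors cancel correctly (indeed $\binom{\beta-t}{N-t}\binom{\beta}{t} = \binom{\beta}{N}\binom{N}{t}$, so everything collapses to $e_t \le \xi^t$, which is immediate since $e_t$ is dominated by the full expansion of $(\sum a_j)^t$ when the $a_j\ge 0$). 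I would double-check whether Lemma~\ref{MaxLemma} as stated (with $N$ variables) is being invoked with $\beta$ variables — it clearly holds verbatim with any number of variables $\ge t$ — and then conclude that the inequality holds for every measurable $A$. Since $\mathcal{C}^d_0$, $\mathcal{D}_0^d$, and indeed any family $\mathcal{S}$ of measurable sets consists of measurable sets, applying the bound to $A$ and to its complement (the complement of a measurable set is measurable, and the same argument applies) yields both the upper and lower negative dependence conditions with $\gamma=1$, proving $\mathcal{S}$-negative dependence for arbitrary $\mathcal{S}$.
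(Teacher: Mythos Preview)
Your proposal is correct and follows essentially the same approach as the paper's proof: condition on the (random) choice of occupied strata, express the probability as a weighted elementary symmetric polynomial in the numbers $a_j = \lambda^d(A\cap B_j)$, and then invoke Lemma~\ref{MaxLemma} to bound that symmetric polynomial by its value at the barycentre. The paper sums over ordered $t$-tuples of distinct strata indices, while you sum first over the random $N$-set $S$ and then over unordered $t$-subsets $J\subseteq S$; after your Vandermonde-type identity $\binom{\beta-t}{N-t}\binom{\beta}{t}=\binom{\beta}{N}\binom{N}{t}$ both computations give the same expression $\tfrac{\beta^t}{\binom{\beta}{t}}\,e_t(a)$, and Lemma~\ref{MaxLemma} finishes the job in the same way.

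One small slip worth flagging: your parenthetical remark that ``everything collapses to $e_t \le \xi^t$, which is immediate since $e_t$ is dominated by the full expansion of $(\sum a_j)^t$'' is not quite right. The normalization constant in front of $e_t(a)$ is $\beta^t/\binom{\beta}{t}$, which is \emph{strictly larger} than $1$ for $t\ge 2$; hence the trivial bound $e_t(a)\le \xi^t$ would only give $\Prob\le \tfrac{\beta^t}{\binom{\beta}{t}}\,\xi^t > \xi^t$, which is useless. You genuinely need the sharp Maclaurin-type bound $e_t(a)\le \binom{\beta}{t}(\xi/\beta)^t$ from Lemma~\ref{MaxLemma}. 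Since your main argument does invoke that lemma correctly before the parenthetical, this does not affect the validity of the proof --- just drop the misleading shortcut.
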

\begin{proof}
Fix $t$ as in the statement of the theorem and define
$$D_t = \{(k_1,\ldots, k_t) \in [\beta]^t : \forall_{i,j \in [t]} \hspace{3mm} i \neq j \implies k_i \neq k_j \}.$$
Note that $|D_t| = \beta(\beta-1) \cdots (\beta-t+1).$
For $k = (k_1,\ldots, k_t) \in D_t$ we have
$$\Prob(\bigcap_{j = 1}^t \{Y_{k_j} = 1\}) = \frac{\binom{\beta - t}{N-t}}{\binom{\beta}{N}} = \frac{N(N-1)\cdots (N-t+1)}{\beta(\beta-1)\cdots(\beta - t + 1)}.$$
By Lemma \ref{MaxLemma} it follows
\begin{align*}
& \Prob(\bigcap_{j = 1}^t \{p_j \in A\}) = \sum_{k \in D_t} \Prob(\bigcap_{j = 1}^t p_j \in A| \bigcap_{j = 1}^t \{p_j \in B_{k_j}\}) \Prob(\bigcap_{j = 1}^t \{p_j \in B_{k_j} \} | \bigcap_{j = 1}^t \{Y_{k_j} = 1\})\Prob(\bigcap_{j = 1}^t \{Y_{k_j} = 1\})
\\
& = \sum_{k \in D_t} \prod_{j = 1}^t \frac{\lambda^d(A \cap B_{k_j})}{\lambda^d(B_{k_j})} \frac{1}{N(N-1)\cdots (N-t+1)} \frac{N(N-1)\cdots (N-t+1)}{\beta(\beta-1)\cdots(\beta - t + 1)}
\\
& = \frac{1}{\beta(\beta -1 ) \cdots (\beta - t + 1)} \sum_{k \in D_t} \prod_{j = 1}^t \frac{\lambda^d(A \cap B_{k_j})}{\lambda^d(B_{k_j})}
\\
& \leq \frac{1}{\beta(\beta -1 ) \cdots (\beta - t + 1)} \beta(\beta -1 ) \cdots (\beta - t + 1) \left( \frac{\lambda^d(A)}{\beta} \right)^t \beta^t = (\lambda^d(A))^t.
\end{align*}
\end{proof}

\begin{remark}\label{NoPNDSSS}
 Without further information on the strata we cannot make any conclusions about pairwise negative dependence of generalized stratified sampling. As an example consider a stratified sampling scheme $\mathcal{P} = (p_1,p_2)$  defined by two strata $B_1,B_2$ in $d \geq 2.$ One may choose $B_1,B_2$ and $Q,R \in \mathcal{C}_1^d$ in such a way that $Q \subset B_1, B_2 \subset R$ and $R \neq [0,1)^d,$ see Figure \ref{NoPNDSSSFig}. In this case however
 $$\Prob(p_2 \in R| p_1 \in Q) = 1, $$
 and the sampling scheme is not pairwise negatively dependent.
 
 On the other hand if we consider strata $B_j, j = 1, \ldots, N,$ with $B_j := [\tfrac{j-1}{N}, \tfrac{j}{N}) \times [0,1)^{d-1}$ then this practically boils down to the one-dimensional case and so the corresponding sampling scheme is pairwise negatively dependent, cf. Lemma \ref{SSSpND}. 
\end{remark}

 \begin{figure}
  \begin{center}
  \begin{tikzpicture}
    \draw(0,0)--(0,5);
    \draw(0,0)--(5,0);
    \draw(5,0)--(5,5);
    \draw(0,5)--(5,5);
    
    \draw(2,0)--(2,4)--(5,4);
     \filldraw 
(3.5,2) circle (0.1pt) node[below] {$B_2$};
    \filldraw 
(1,2.5) circle (0.1pt) node[below] {$B_1$};
    \end{tikzpicture}
  \end{center}
\caption{Example of strata for Remark \ref{NoPNDSSS}}
\label{NoPNDSSSFig}
\end{figure}
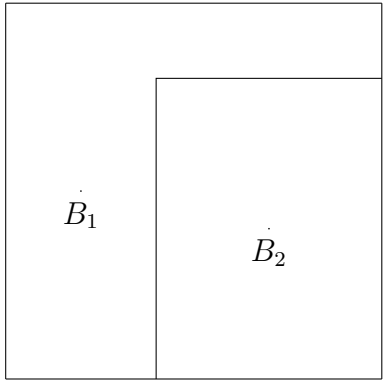

\begin{comment}
\begin{remark}
We note that without specifying the set system $\mathcal{S}$ and the strata we cannot make any conclusions about $\mathcal{S}$-pairwise negative dependence of generalized stratified sampling. Indeed, e.g. for $d=1, \mathcal{S} = \mathcal{C}_1^1$ and strata being intervals $[\tfrac{j-1}{N}, \tfrac{j}{N}), j=1,\ldots, N$ the resulting sampling scheme is pairwise negatively dependent, see Lemma \ref{SSSpND}.

On the other hand let $\mathcal{P}$ be any generalized stratified sampling and put $$\mathcal{S} = \{B_j \, | j = 1, \ldots, \beta\}$$ to be the set of its strata. Then, for distinct $A_1,A_2 \in \mathcal{S}$ one gets
\begin{align*}
& \Prob(p_1 \in A_1, p_2 \in A_2) = \Prob(p_1 \in B_1, p_2 \in B_2 | Y_1 = 1, Y_2 = 1)\Prob(Y_1 = 1, Y_2 = 1)
\\
& = \frac{1}{N(N-1)}\frac{N(N-1)}{\beta(\beta - 1)} = \frac{1}{\beta(\beta - 1)} > \lambda^d(A_1)\lambda^d(A_2).
\end{align*}
\end{remark}
\end{comment}

\subsection{Pairwise Negative Dependence and Conditional NQD Property of Randomly Shifted and Jittered Rank-1 Lattices}\label{R1L}
The exposition follows closely \cite{WG19}.
Let $N$ be prime. By $\mathbb{F} := \mathbb{F}_{N}$ we denote $\{0,1,\ldots, N-1\}$. Moreover, $\mathbb{F}^* := \mathbb{F} \setminus \{0\}.$ We also put $\widetilde{\mathbb{F}} := \frac{1}{N} \mathbb{F}$ and similarly $\widetilde{\mathbb{F}}^* := \frac{1}{N}\mathbb{F}^*.$

A discrete subgroup $\mathcal{L}$ of the $d-$dimensional torus $\mathbb{T}^d$ is called a lattice. A set $(y_j)_{j = 1}^N$ is a rank-1 lattice if for some $g \in (\tilde{\mathbb{F}}^*)^d$ it admits a representation
$$y_j = (j-1)g \hspace{2mm}  \modulo 1 \quad  j = 1,\ldots, N.$$ In this case $g$ is called a generating vector of the lattice.

Note that our definition differs from the usual one in that we allow only for generating vectors $g$ from $(\widetilde{\mathbb{F}}^*)^d$ and not from $\widetilde{\mathbb{F}}^d,$ which saves us from considering some degenerate cases.

We want now to define a sampling scheme based on rank-1 lattices which we call randomly shifted and jittered rank-1 lattice. To this end let $(y_j)_{j = 1}^N$ be a rank-1 lattice with generating vector chosen randomly uniformly from $(\widetilde{\mathbb{F}}^*)^d.$ Let $U$ be distributed uniformly on $\widetilde{\mathbb{F}}^d,$ $J_j, j = 1, \ldots, N$ be uniformly distributed on $[0, \frac{1}{N})^d$ and $\pi$ be a uniformly chosen permutation of $\{1,\ldots, N\}.$ Moreover, let all of the aforementioned random variables be independent. We put
$$p_{j}:= y_{\pi(j)} + U + J_j \text{ mod } 1, j = 1, \ldots, N.$$
We call the sampling scheme $\mathcal{P} = (p_j)_{j = 1}^N$ a \emph{randomly shifted and jittered rank-1 lattice} (\emph{RSJ rank-1 lattice}).
Putting it in words: we first take a rank-1 lattice with a random generator and symmetrize it. Then we shift the lattice uniformly on the torus, where the shift has resolution $\frac{1}{N}$. In the last step we jitter every point independently of all the other points in a cube of volume $(\tfrac{1}{N})^d$.

The following is Theorem 3.4. from \cite{WG19}.
\begin{theorem}\label{PairNegDep}
Let $N$ be prime, $d \in \mathbb{N}$. RSJ rank-1 lattice $\mathcal{P} = (p_j)_{j = 1}^N$ in $[0,1)^d$ is a coordinatewise independent NQD sampling scheme.
\end{theorem}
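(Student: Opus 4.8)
The plan is to verify directly the two defining conditions of a coordinatewise independent NQD sampling scheme: first, that for each coordinate $i \in [d]$ the pairs $(p_1^{(i)}, p_2^{(i)})$ are independent across $i$; and second, that for each fixed $i$ and all $q,r \in [0,1)$ one has $\Prob(p_1^{(i)} \in [q,1),\, p_2^{(i)} \in [r,1)) \leq \Prob(p_1^{(i)} \in [q,1))\,\Prob(p_2^{(i)} \in [r,1))$. The coordinatewise independence is the easy half: by construction $p_j^{(i)} = y_{\pi(j)}^{(i)} + U^{(i)} + J_j^{(i)} \bmod 1$, where the generating vector $g$ has independent coordinates drawn uniformly from $\widetilde{\mathbb F}^*$, the shift $U$ has independent coordinates uniform on $\widetilde{\mathbb F}$, and the jitter $J_j$ has independent coordinates uniform on $[0,\tfrac1N)$; only $\pi$ is shared across coordinates, but $\pi$ acts as a common relabeling and, since we look at the joint law of the \emph{pair} $(p_1^{(i)},p_2^{(i)})$, one can condition on the (unordered) pair $\{\pi(1),\pi(2)\}$ and check that the conditional laws factor over $i$. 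So I would reduce everything to a one-dimensional statement about the pair $(p_1,p_2)$ of a single-coordinate RSJ rank-1 lattice.

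For the one-dimensional NQD inequality, the key observation is a decomposition of a single coordinate $p_j$ as an ``integer part'' living in $\widetilde{\mathbb F}$ plus a jitter in $[0,\tfrac1N)$. Write $p_j = \tfrac{1}{N}\big( (\pi(j)-1)a + b \bmod N \big) + J_j$ where $a \in \mathbb F^*$ is (the $N$ times) the coordinate of the generating vector and $b \in \mathbb F$ is ($N$ times) the coordinate of $U$, both uniform and independent, and $J_j$ uniform on $[0,\tfrac1N)$ independent of everything. Conditioning on the unordered pair $\{\pi(1),\pi(2)\} = \{k,l\}$ with $k\neq l$, the two ``cell indices'' $Z_1 := (\pi(1)-1)a + b \bmod N$ and $Z_2 := (\pi(2)-1)a+b \bmod N$ satisfy $Z_2 - Z_1 \equiv (\pi(2)-\pi(1))a \pmod N$; since $\pi(2)-\pi(1)$ is a fixed nonzero element of $\mathbb F$ and $a$ is uniform on $\mathbb F^*$, the difference $Z_2-Z_1$ is uniform on $\mathbb F^*$, and then $b$ uniform makes $(Z_1,Z_2)$ uniform on $\{(z_1,z_2)\in\mathbb F^2: z_1\neq z_2\}$. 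Thus the coordinate pair is essentially: pick two distinct cells of the partition $\{[\tfrac{m}{N},\tfrac{m+1}{N})\}_{m=0}^{N-1}$ uniformly at random, and drop one independent uniform point in each — i.e., the pair has exactly the same bivariate law as a pair from simple stratified sampling.

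The main obstacle — really the only substantive point — is making this last identification of bivariate laws airtight, in particular handling the torus wrap-around in $b$ correctly so that the map $(a,b)\mapsto(Z_1,Z_2)$ is indeed a bijection onto the off-diagonal of $\mathbb F^2$ (which it is, since for fixed $a$ the map $b\mapsto Z_1$ is a bijection of $\mathbb F$, and $a$ then ranges over the $N-1$ possible nonzero differences). Once the pair $(p_1,p_2)$ is shown to have the same distribution as a pair from one-dimensional simple stratified sampling, the desired inequality $\Prob(p_1\in[q,1),p_2\in[r,1))\le\Prob(p_1\in[q,1))\Prob(p_2\in[r,1))$ is exactly the pairwise negative dependence statement of Lemma~\ref{SSSpND} (restricted to boxes anchored at $1$), so I would invoke that lemma rather than recompute. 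Assembling the coordinatewise independence from the first paragraph with this per-coordinate NQD property yields the claim.
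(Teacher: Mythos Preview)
The paper does not actually prove Theorem~\ref{PairNegDep}; it merely quotes it as Theorem~3.4 of \cite{WG19}. So there is no in-paper proof to compare against.

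Your argument is correct and is essentially the natural one. The crucial computation is the bijection: for any fixed ordered pair $(\pi(1),\pi(2))=(k,l)$ with $k\neq l$, the map $\mathbb F^*\times\mathbb F\to\{(z_1,z_2)\in\mathbb F^2:z_1\neq z_2\}$, $(a,b)\mapsto\big((k-1)a+b,(l-1)a+b\big)$, is a bijection because $N$ is prime. This shows that the conditional law of $(Z_1^{(i)},Z_2^{(i)})$ given $\pi$ is uniform on the off-diagonal of $\mathbb F^2$ and \emph{does not depend on} $(\pi(1),\pi(2))$. That last clause is what actually makes your coordinatewise-independence argument work: conditionally on $\pi$ the pairs $(p_1^{(i)},p_2^{(i)})_{i=1}^d$ are independent (all remaining randomness is independent across $i$), and since each conditional law is the same for every value of $\pi$, the unconditional joint law equals the product of the unconditional marginals. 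Your first paragraph gestures at this but leans on ``conditional laws factor over $i$'', which by itself is not enough---conditional independence plus a nontrivial common conditioning variable does not give unconditional independence. Make the ``does not depend on $\pi$'' step explicit.

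Once that is in place, your identification of the one-dimensional bivariate law with that of simple stratified sampling is exactly right, and invoking Lemma~\ref{SSSpND} (which in $d=1$ is precisely the NQD inequality for intervals $[q,1)$) finishes the proof.
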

In particular, RSJ rank-1 lattice  is a pairwise negatively dependent and a conditionally NQD sampling scheme, which means that both alternative conditions for $\mathcal{P}$ from Theorem \ref{VarRedThm} hold.

In contrast to generalized stratified sampling (cf. Theorem \ref{GSSThm}) and Latin hypercube sampling (see Theorem \ref{LHSThm}), RSJ rank-1 lattice is for $d \geq 2$ and $N \geq 3$ in general not $\mathcal{C}_0^d$- negatively dependendent, see Subsection \ref{SECT5.1.}.

\subsection{Negative Dependence, Conditional NQD Property, and Pairwise Negative Dependence of Latin Hypercube Sampling}\label{LHS}
Let $(\pi_i)_{i = 1}^d$ be independent uniformly chosen permutations of $[N],$ and $U^{(i)}_j, i = 1,\ldots, d, j = 1, \ldots, N$ be independent random variables distributed uniformly on $(0,1]$ and independent also of the permutations. A sampling scheme $(p_j)_{j=1}^N$ is called a \emph{Latin hypercube sampling} if the $i-$th coordinate of the $j-$th point $p_j^{(i)}$ is given by
$$p_j^{(i)} = \frac{\pi_{i}(j) - U^{(i)}_j}{N}, i = 1,\ldots, d, j = 1,\ldots, N.$$
What one intuitively does is the following: one cuts $[0,1)^d$ into slices $(S_{k,j})_{j = 1}^N, k = 1, \ldots, d$ given by
$$S_{k,j} = \prod_{j = 1}^{k-1}[0,1) \times [\tfrac{j-1}{N},\tfrac{j}{N}) \times \prod_{j = k+1}^d [0,1) $$
and puts $N$ points in such a way that in every slice there is exactly one point.

It is worth mentioning that for $d = 1$ Latin hypercube sampling is exactly the same as RSJ rank-1 lattice (namely simple stratified sampling). For $d \geq 2$ the joint distribution of a pair of points is the same for Latin hypercube sampling as for RSJ rank-1 lattice. But if we sample more than two points, then the joint distributions already differ, see \cite{WG19}.

Negative dependence of Latin hypercube Sample has been studied in \cite{GH18} and pairwise negative dependence has been investigated in \cite{WG19}.
\begin{theorem}\label{LHSThm}
Latin hypercube Sample in $[0,1)^d$ is a sampling scheme which is
\begin{itemize}
\item[(i)] $\mathcal{D}^d_0$ - $e^{d}$ - negatively dependent,
\item[(ii)] $\mathcal{C}_0^d$ - negatively dependent,
\item[(iii)] coordinatewise independent NQD.
\end{itemize}
\end{theorem}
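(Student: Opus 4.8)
The plan is to verify the three properties of Latin hypercube sampling (LHS) by exploiting its product structure: each coordinate is an independent copy of simple stratified sampling, and the jitter within a cell is independent of everything else. I would first record the key structural observation: for LHS, the $d$ coordinate processes $(p_j^{(i)})_{j=1}^N$, $i=1,\dots,d$, are mutually independent, and each one is a one-dimensional simple stratified sample based on the permutation $\pi_i$ and the uniforms $U^{(i)}_\cdot$. From this, exchangeability of $(p_1,\dots,p_N)$ is immediate (apply the same permutation to all coordinates), so LHS is indeed a sampling scheme, and every single $p_j$ is uniform on $[0,1)^d$.

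For part (iii), coordinatewise independent NQD, I would note that coordinatewise independence is exactly the structural observation above, and the per-coordinate NQD inequality
$$\Prob(p_1^{(i)}\in[q,1),\,p_2^{(i)}\in[r,1)) \le \Prob(p_1^{(i)}\in[q,1))\,\Prob(p_2^{(i)}\in[r,1))$$
is Lemma~\ref{SSSpND} applied to the one-dimensional marginal (simple stratified sampling is pairwise negatively dependent, hence its pair of points is NQD in the one-dimensional sense). For part (ii), $\mathcal{C}_0^d$-negative dependence, I would use that a box $[0,a) = \prod_{i=1}^d [0,a_i)$ factors as a product over coordinates, so $\1_{[0,a)}(p_j) = \prod_{i=1}^d \1_{[0,a_i)}(p_j^{(i)})$; by coordinatewise independence the events $\{p_j\in[0,a)\}$ and $\{p_j\notin[0,a)\}$ reduce (after some manipulation) to products over $i$ of one-dimensional quantities, and one invokes negative dependence of simple stratified sampling on corners $\mathcal{C}_0^1$ coordinate by coordinate. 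The cleanest route is to apply Theorem~\ref{GSSThm} in dimension one with strata $I_j=[\frac{j-1}{N},\frac{j}{N})$ and $\beta=N$, which gives $\Prob(\bigcap_{j\in u}\{p_j^{(i)}\in A\}) \le \prod_{j\in u}\lambda(A)$ for any measurable $A$ and any $u$; the factorization over coordinates then yields the upper bound $\prod_{j\in u}\lambda^d([0,a))$. The lower bound (condition \eqref{NLODDef}) must be checked separately since for $N>2$ it is not implied; here one writes $\{p_j\notin[0,a)\} = \bigcup_i\{p_j^{(i)}\ge a_i\}$ and uses inclusion-exclusion together with negative dependence on each coordinate, or—more efficiently—cites that this was established in \cite{GH18}.

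For part (i), $\mathcal{D}_0^d$-$e^d$-negative dependence, this is the substantive claim and I would simply invoke the analysis of \cite{GH18}, where exactly this bound for Latin hypercube sampling is proved; the constant $e^d$ comes from the fact that a difference of two corners $Q\setminus R$ is not a product set, so one approximates $\Prob(\bigcap_{j\in u}\{p_j\in Q\setminus R\})$ by splitting into products over coordinates at the cost of a factor that, after optimizing, is bounded by $e^d$. The main obstacle in a fully self-contained treatment would be precisely part (i): unlike corners, elements of $\mathcal{D}_0^d$ do not factor across coordinates, so the clean product argument breaks down and one needs the more delicate counting/optimization from \cite{GH18} (this is the same phenomenon flagged in the discussion preceding Theorem~\ref{main_theo}). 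Parts (ii) and (iii), by contrast, are essentially bookkeeping once the coordinatewise independence and the one-dimensional Lemma~\ref{SSSpND} / Theorem~\ref{GSSThm} are in hand.
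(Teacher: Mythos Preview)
Your proposal is correct and aligns with the paper's treatment: the paper does not give a self-contained proof but simply records that (i) and (ii) are Theorem~3.5 of \cite{GH18} and (iii) is Theorem~3.4 of \cite{WG19}. Your sketch is in fact more explicit than the paper on the easier points---you unpack (iii) via coordinatewise independence plus Lemma~\ref{SSSpND}, and the upper half of (ii) via factorization of $[0,a)$ across coordinates together with Theorem~\ref{GSSThm} in dimension one---while for (i) and the lower half of (ii) you, like the paper, ultimately fall back on \cite{GH18}.

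One small improvement you could make: for the lower bound \eqref{NLODDef} in (ii) you hesitate between an inclusion--exclusion argument and a citation. A cleaner route already available in this paper is to iterate Proposition~\ref{Concatenation}(ii): each one-dimensional marginal is, by Theorem~\ref{GSSThm}, lower $1$-negatively dependent with respect to $[0,a_i)$, and concatenating the $d$ mutually independent coordinates via Proposition~\ref{Concatenation}(ii) yields lower $1$-negative dependence with respect to the product box $[0,a)$. This avoids both the vague inclusion--exclusion step and the external citation for that particular inequality.
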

In the above, statements $(i)$ and $(ii)$ follow from Theorem $3.5.$ from \cite{GH18}, and statement $(iii)$ is Theorem $3.4.$ from \cite{WG19}.

In particular from $(iii)$ it follows that LHS is pairwise negatively dependent as well as conditionally NQD. 

\subsection{Pairwise Negative Dependence of Scrambled (0,m,s)-Nets.}

The so called $(t,m,s)$-nets belong to the most regular deterministic point sets. First defined by Niederreiter in \cite{Nie87}, they have been subject of extensive research. For a nice introduction on $(t,m,s)$-nets and their randomization, see \cite{Mat10}. 

Let us fix a base $b\in \mathbb{N}_{\geq 2}.$ For $j \in \mathbb{N}_0$ and $k = 0,1,\ldots, b^j-1$ an interval of the form
$$E_k^j = [kb^{-j}, (k+1)b^{-j})$$
is called an \emph{elementary interval (in base b)}. Moreover, for $s \in \mathbb{N}$ and vectors ${\bf{j}} = (j_1, \ldots, j_s)$ and ${\bf{k}} = (k_1,\ldots, k_s)$ (where for every $l = 1,\ldots, s,$ we require $0\leq k_l \leq b^{j_l} - 1$) we define an $s$-dimensional elementary interval via
$$E_{\bf{k}}^{{\bf{j}}} := \prod_{l = 1}^s E_{k_l}^{j_l}.$$

A \emph{$(t,m,s)$-net} is any $P \subset [0,1]^s$ such that for any elementary interval $E$ with $\lambda^s(E) = b^{-m + t}$ there are exactly $b^t$ point in $P \cap E.$ It is easily seen that a $(t,m,s)$-net consists of exactly $b^{m}$ points. Specific constructions of $(t,m,s)$-nets are known.

Scrambling of depth $m$ is a bijective function $S: [0,1]^s \rightarrow [0,1]^s$ such that for any elementary interval $E$ with $\lambda^s(E) = b^{-m}$ the image $S(E)$ is again an elementary interval of volume $b^{-m}.$ 

Now let us focus on the case $t = 0.$ Taking a $(0,m,s)$-net and applying to it a random scrambling of depth $m$ one obtains a randomized point set. Scramblings are defined in such a way that for any scrambling $S$ of depth $m$ and a $(0,m,s)$-net $P$, the point set $S(P)$ is again a $(0,m,s)$-net. By an appropriate choice of randomized scrambling $\tilde{S}$ one may make $\tilde{S}(P)$ to be a sampling scheme. In this case we call $\tilde{S}(P)$ a scrambled $(0,m,s)$-net. Scrambling as a way of randomization of $(0,m,s)$-nets has been studied by A.B. Owen, e.g., in \cite{Owe97b}.

In a recent article \cite{LW19} C. Lemieux and J. Wiart have shown the following theorem (which follows from Corollary $4.10$ from the aforementioned article).

\begin{theorem}
 Scrambled $(0,m,s)$-nets are pairwise negatively dependent sampling schemes.
\end{theorem}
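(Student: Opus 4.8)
The plan is to check the defining inequality of pairwise negative dependence directly. Since the scrambled net is a sampling scheme, $\Prob(p_1\in Q)=\lambda^s(Q)$ for every $Q\in\mathcal{C}_1^s$, so it suffices to show $\Prob(p_1\in Q,\,p_2\in R)\le\lambda^s(Q)\lambda^s(R)$ for all $Q,R\in\mathcal{C}_1^s$; the complementary inequality (\ref{pNUOD}) then comes for free by the remark following the definition of pairwise negative dependence. Write $Q=\prod_{l=1}^s[a_l,1)$, $R=\prod_{l=1}^s[a'_l,1)$, let $P$ be the underlying $(0,m,s)$-net with $N=b^m$ points, and let $\tilde S$ be the random scrambling of depth $m$ (followed by the uniform placement inside each interval of volume $b^{-ms}$ that turns the construction into a sampling scheme). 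After symmetrization $(p_1,p_2)$ is obtained by choosing an ordered pair $(x,x')$ of distinct points of $P$ uniformly at random and applying $\tilde S$, hence
\begin{equation*}
\Prob(p_1\in Q,\,p_2\in R)=\frac{1}{N(N-1)}\sum_{x\neq x'}\Prob\bigl(\tilde S(x)\in Q,\ \tilde S(x')\in R\bigr).
\end{equation*}

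The next step uses the structure of nested uniform scrambling (cf.\ \cite{Owe97b}): it acts by independent randomness in each coordinate; it maps the common leading block of base-$b$ digits of two points to a uniformly distributed block of the same length; and beyond the first digit in which two points disagree all further digits, as well as the final uniform placement, are fresh and independent. It follows that, for fixed $(x,x')$, the pair $(\tilde S(x)^{(l)},\tilde S(x')^{(l)})$ depends only on the number $c_l=c_l(x,x')\in\{0,\dots,m\}$ of coinciding leading base-$b$ digits of the $l$-th coordinates of $x$ and $x'$: it is distributed as two i.i.d.\ uniform points in a common base-$b$ interval of length $b^{-c_l}$ (itself uniformly chosen), conditioned on lying in distinct children of that interval. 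Writing $\psi_c(a,a'):=\Prob(\tilde S(x)^{(l)}\ge a,\ \tilde S(x')^{(l)}\ge a')$ for a pair with $c_l=c$, and using independence across coordinates, we get
\begin{equation*}
\Prob(p_1\in Q,\,p_2\in R)=\frac{1}{N(N-1)}\sum_{x\neq x'}\ \prod_{l=1}^s\psi_{c_l(x,x')}(a_l,a'_l).
\end{equation*}
Here the $(0,m,s)$-net property enters decisively: two distinct points of $P$ cannot share an elementary interval of volume $b^{-m}$, which in terms of digits reads $\sum_{l=1}^s c_l(x,x')\le m-1$; in particular $c_l\le m-1$, so the degenerate case $c_l=m$ never occurs. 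Moreover, counting pairs of net points that share a common elementary interval of a fixed shape and applying inclusion--exclusion shows that the number of ordered pairs with a prescribed profile $(c_1,\dots,c_s)$ depends only on $k=\sum_l c_l$, with an explicit alternating-sum formula.

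It remains to establish the scalar estimate $\sum_{x\neq x'}\prod_{l=1}^s\psi_{c_l(x,x')}(a_l,a'_l)\le N(N-1)\prod_{l=1}^s(1-a_l)(1-a'_l)$. Writing $\psi_c(a,a')=(1-a)(1-a')+\kappa_c(a,a')$, the leading term reproduces the right-hand side exactly, so one must show that the sum over all ordered pairs of the remaining products is $\le 0$. One has $\kappa_0\le 0$, because a pair that already disagrees in the first digit is distributed exactly as a pair of points of simple stratified sampling with $b$ strata, which is pairwise negatively dependent by Lemma \ref{SSSpND}. However, for $c\ge 1$ the term $\kappa_c(a,a')$ can be strictly positive (e.g.\ $b=2$, $c=1$, $a=a'=1/2$), so -- in contrast to generalized stratified sampling, where a single convexity lemma suffices (Theorem \ref{GSSThm}) -- the inequality cannot be argued pair by pair. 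The positive contributions of pairs with long common prefixes must instead be cancelled globally by the far more numerous pairs with small $c_l$, using the profile counts together with the constraint $\sum_l c_l\le m-1$; once the coordinate products are resummed this reduces to a single inequality in $k$ and $b$, which can be checked directly. This global cancellation is the main obstacle; it is carried out in \cite{LW19}, and the theorem follows from Corollary~4.10 there.
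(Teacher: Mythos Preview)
The paper gives no proof of this theorem at all: it simply records that the result follows from Corollary~4.10 of \cite{LW19}. Your proposal ends at precisely the same place --- you defer the decisive global cancellation step to \cite{LW19}, Corollary~4.10 --- so in that sense your ``proof'' and the paper's coincide. The additional material you supply (the reduction to coordinate-wise factors $\psi_{c_l}$, the digit-profile vector $(c_1,\dots,c_s)$, the bound $\sum_l c_l\le m-1$ from the $(0,m,s)$-net property, and the observation that $\kappa_c$ can be positive for $c\ge1$ so a termwise argument fails) is a reasonable sketch of the architecture of the Lemieux--Wiart argument, but it is extra exposition rather than an alternative route, and it is not self-contained: the actual inequality is still established only by citation.
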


\subsection{Mixed Randomized Sequences}

As already mentioned, part of the success of RQMC stems from the fact that in many high-dimensional practical integration problems only a small number of coordinates is of real importance. It stands to reason that one tries to use it to his avail by constructing quadratures in which one uses RQMC on the ``important'' coordinates and simple (usually much cheaper) Monte Carlo for the rest of the coordinates. This method is sometimes referred to as padding and the resulting sequences of integration nodes are called mixed sequences. Let us give a formal definition.

\begin{definition}
Let $d,d',d''\in\N$ with $d=d'+d''$. Let $X=(X_k)_{k\in\N}$ be a
sequence in $[0,1)^{d'}$, and
let $Y=(Y_k)_{k\in\N}$ be a sequence 
%of independent uniformly distributed random vectors
in $[0,1)^{d''}$. The $d$-dimensional concatenated
sequence $Z=(Z_k)_{k\in \N} = (X_k, Y_k)_{k\in \N}$
is called a \emph{mixed sequence}. 
If $Y$ is a sequence of independent uniformly distributed random points, one also says that $Z$ results from $X$ by
\emph{padding by Monte Carlo} and calls $Z$ a \emph{hybrid-Monte Carlo sequence}.
If $X$ and $Y$ are both randomized sequences, we call $Z$ a \emph{mixed randomized sequence}.
\end{definition}

Padding by Monte Carlo was introduced by Spanier in \cite{Spa95} to tackle
problems in particle transport theory. He suggested to use a hybrid-Monte Carlo sequence resulting from padding a deterministic low-discrepancy sequence.
Hybrid-Monte Carlo sequences showed a favorable performance in several numerical experiments, see, e.g.,
\cite{Okt96, OTB06}. The latter papers also provided
theoretical results on probabilistic
discrepancy estimates of hybrid-Monte Carlo sequences which have been improved in
\cite{AH12, Gne09}. 
Favorable discrepancy bounds for padding Latin hypercube sampling (LHS) by Monte Carlo were provided in \cite{GH18}.
Padding a sequence by LHS (instead of by Monte Carlo) was considered earlier by Owen \cite[Example~5]{Owe94}.

A related line of research, initiated in \cite{Nie09}, is to study the discrepancy of concatenated sequences that result from two deterministic sequences.
More recent results can, e.g., be found in \cite{GPHN13, DHL17, Hof18} and the literature mentioned therein.

The following proposition shows that concatenating two mutually independent negatively dependent sampling schemes results again in a (higher dimensional) negatively dependent sampling scheme.
%The aim of this subsection is to show that via padding a negatively dependent sampling scheme $\mathcal{P}$ by Monte Carlo, we get again a negatively dependent sampling scheme. 
A weaker version of the next proposition may be found in \cite{Heb12}; cf. Lemma~5 there.

\begin{proposition}\label{Concatenation}
Let $d,d',d''\in \N$ such that $d=d'+d''$. Let $A \subseteq [0,1)^{d'}$,
$B \subseteq [0,1)^{d''}$ be Borel measurable sets. Let $x_1,\ldots,x_N$ be a sampling scheme in $[0,1)^{d'}$
and $y_1,\ldots,y_N$ a sampling scheme in $[0,1)^{d''}$. Furthermore, let $\alpha, \beta \geq 1$.
\begin{itemize}
\item[(i)] If the random variables
$\1_A(x_i)$, $i=1,\ldots,N$, and $\1_B(y_i)$, $i=1,\ldots,N$, are upper negatively $\alpha$- and 
$\beta$- dependent, respectively,
and mutually independent, then the random variables $\1_{A\times B}(x_i,y_i)$, $i=1,\ldots,N$,
induced by the random vectors
$(x_1,y_1),\ldots,(x_N,y_N)$ in $[0,1)^d$, are upper negatively $\alpha \beta$-dependent.
\item[(ii)] If the random variables
$\1_A(x_i)$, $i=1,\ldots,N$, and $\1_B(y_i)$, $i=1,\ldots,N$, are lower negatively $\alpha$- and $\beta$-dependent, respectively,
and mutually independent, then the random variables $\1_{A\times B}(x_i,y_i)$, $i=1,\ldots,N$,
induced by the random vectors
$(x_1,y_1)$, ... ,$(x_N,y_N)$ in $[0,1)^d$, are lower negatively $\alpha \beta$-dependent.
\end{itemize}
\end{proposition}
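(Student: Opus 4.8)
The plan is to work pointwise. Since $\1_{A\times B}(x_i,y_i) = \1_A(x_i)\,\1_B(y_i)$, put $a_i := \1_A(x_i)$, $b_i := \1_B(y_i)$ and $T_i := a_i b_i$, and reduce both claims to the defining inequalities of $\gamma$-negative dependence for the two families $(a_i)_{i=1}^N$ and $(b_i)_{i=1}^N$ separately, using their mutual independence to factor and later recombine the relevant probabilities. Observe at the outset that $\Prob(T_i=1)=\Prob(a_i=1)\,\Prob(b_i=1)$, hence $\Prob(T_i=0)=1-\Prob(a_i=1)\,\Prob(b_i=1)$.

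Part (i) is then a one-line computation: for any $u\subseteq[N]$ we have
\[
\bigcap_{i\in u}\{T_i=1\}\;=\;\Big(\bigcap_{i\in u}\{a_i=1\}\Big)\cap\Big(\bigcap_{i\in u}\{b_i=1\}\Big),
\]
the two intersections on the right are independent, and bounding their probabilities by the upper $\alpha$- resp. $\beta$-negative dependence of $(a_i)$ and $(b_i)$, then regrouping the single-index factors via independence, produces exactly the bound with constant $\alpha\beta$.

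Part (ii) is the substantive case, since $\{T_i=0\}=\{a_i=0\}\cup\{b_i=0\}$ is a union rather than a product. Here I would condition on the vector $(b_i)_{i\in u}$: on the event that $b_i=0$ precisely for $i$ in a given $Z\subseteq u$, the event $\bigcap_{i\in u}\{T_i=0\}$ coincides with $\bigcap_{i\in u\setminus Z}\{a_i=0\}$, whose probability (conditional equals unconditional, by independence) I bound by the \emph{lower} $\alpha$-negative dependence of $(a_i)$ as $\alpha\prod_{i\in u\setminus Z}r_i$, where $r_i:=\Prob(a_i=0)$. Summing over $Z$ and recognizing the sum as an expectation gives
\[
\Prob\Big(\bigcap_{i\in u}\{T_i=0\}\Big)\;\le\;\alpha\,\E\Big[\prod_{i\in u}\big(r_i+(1-r_i)\1_{\{b_i=0\}}\big)\Big].
\]
Now I would expand this product over subsets $W\subseteq u$ (choosing the summand $(1-r_i)\1_{\{b_i=0\}}$ on $W$ and $r_i$ off $W$), take expectations term by term (all terms being nonnegative), and apply the \emph{lower} $\beta$-negative dependence of $(b_i)$ to each factor $\Prob(\bigcap_{i\in W}\{b_i=0\})$; the resulting sum re-factorizes as $\beta\prod_{i\in u}\big(r_i+(1-r_i)\Prob(b_i=0)\big)$. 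Finally $r_i+(1-r_i)\Prob(b_i=0)=1-\Prob(a_i=1)\,\Prob(b_i=1)=\Prob(T_i=0)$, so this equals $\beta\prod_{i\in u}\Prob(T_i=0)$, and combined with the leading $\alpha$ this is the claim of (ii); note that $\alpha\beta\ge1$, so the conclusion is a genuine negative-dependence bound.

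I expect the only obstacle to be bookkeeping: setting up the conditioning identity cleanly, and observing that one really needs \emph{lower} (not upper) negative dependence of \emph{both} families, together with the collapse $r_i+(1-r_i)\Prob(b_i=0)=\Prob(T_i=0)$; there is no analytic subtlety. A direct attempt to imitate (i) by expanding $\prod_{i\in u}(1-a_ib_i)$ yields an alternating-sign sum involving $\Prob(\bigcap_{i\in W}\{a_i=1\})$ and $\Prob(\bigcap_{i\in W}\{b_i=1\})$, which the lower-dependence hypotheses do not control, so the conditioning route appears essential.
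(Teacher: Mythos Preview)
Your argument is correct. Part~(i) is identical to the paper's, but your treatment of part~(ii) is different and in fact cleaner than the paper's. The paper proceeds in two stages: it first establishes the bound in the special case where $(y_j)_{j=1}^N$ is a Monte Carlo sample (conditioning on the values of the $\1_B(y_j)$ and using lower $\alpha$-dependence of the $\1_A(x_j)$), and then reduces the general case to this special one by conditioning instead on the values of the $\1_A(x_j)$ and invoking lower $\beta$-dependence of the $\1_B(y_j)$. In both stages it exploits exchangeability (the sampling-scheme hypothesis) to collapse the sum over subsets $K\subseteq J$ of fixed cardinality $\nu$ into a single representative term times $\binom{t}{\nu}$. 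Your route---packaging the conditioning as the single identity $\Prob(\bigcap_{i\in u}\{T_i=0\})\le\alpha\,\E\big[\prod_{i\in u}(r_i+(1-r_i)\1_{\{b_i=0\}})\big]$, then expanding over subsets $W$ and applying lower $\beta$-dependence termwise---handles the general case directly and never uses exchangeability. This is a genuine gain: it shows that the conclusion of~(ii) holds for arbitrary families of binary random variables, not just those arising from sampling schemes, which is stronger than what the paper's own Remark after the proposition claims (there exchangeability of $(x_j)$ is still assumed in the general case).
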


%In the special case where $x_1,\ldots, x_N$ is a LHS, $y_1,\ldots, y_N$ are Monte Carlo points and $A,B$ are differences of anchored boxes the result has been proved in \cite{GH18}. 

\begin{proof}
Let us first prove statement (i). Obviously we have for $J\subseteq [N]$
\begin{equation*}
\begin{split}
&\Prob \left( \bigcap_{j\in J} \{\1_{A\times B}(x_j,y_j) = 1 \}\right) = \Prob \left( \bigcap_{j\in J} \{x_j \in A\} \cap   \bigcap_{j\in J} \{y_j \in B \} \right)\\
&= \Prob \left( \bigcap_{j\in J} \{x_j \in A\} \right) \Prob \left( \bigcap_{j\in J} \{y_j \in B \}\right)
\le \left( \alpha \prod_{j\in J} \Prob(x_j \in A)\right) \left( \beta \prod_{j\in J}\Prob(y_j \in B) \right)\\
&= \alpha \beta \prod_{j\in J} \Prob \left( \1_{A\times B}(x_j,y_j) =1 \right).
\end{split}
\end{equation*}
%To verify statement (ii), it remains  to show that the random variables
%$\1_{A\times B}(x_i,y_i)$, $i=1,\ldots,n$, are lower negatively $\gamma$-dependent.
We now prove statement (ii). 
Take any $\emptyset \neq J \subseteq [N]$ and set $t = |J|$ . Suppose first that $((x_j,y_j))_{j = 1}^N$ is a hybrid-Monte Carlo sequence, i.e. $(y_j)_{j = 1}^N$ is a Monte Carlo sampling scheme.
Due to our assumptions in statement~(ii) we obtain
\begin{equation*}
\begin{split}
&\Prob \left( \bigcap_{j\in J} \{\1_{A\times B}(x_j,y_j) = 0\} \right) 
\\
& = \sum_{K \subseteq J} \Prob\left(\bigcap_{j \in J} \{\1_{A \times B}(x_j,y_j) = 0\}   \cap \bigcap_{j \in K} \{\1_B(y_j) = 1 \}  \cap \bigcap_{j \in J \setminus K} \{\1_B(y_j) = 0 \}  \right) 
\\
& = \sum_{\nu = 0}^t \binom{t}{\nu} \Prob\left( \bigcap_{j = 1}^{\nu} \{\1_A(x_j) = 0 \}   \right) \Prob\left( \bigcap_{j = 1}^{\nu} \{\1_B(y_j) = 1 \}    \cap \bigcap_{j = \nu + 1}^{t} \{\1_B(y_j) = 0 \}\right) 
\\
&\leq \alpha \sum_{\nu = 0}^t \binom{t}{\nu} \Prob \left( \1_A(x_1) = 0 \right)^{\nu} \Prob\left( 1_B(y_1) = 1 \right)^{\nu} \Prob\left(  \1_B(y_1) = 0    \right)^{t - \nu}
\\
& = \alpha [\Prob(\1_A(x_1) = 0 ) \Prob(\1_B(y_1) = 1) + \Prob(\1_B(y_1) = 0 ) ]^t
= \alpha \Prob(\1_{A \times B}(x_1,y_1) = 0 )^t.
\end{split}
\end{equation*}
 Now let $(y_j)_{j = 1}^N$ be any sampling scheme in $[0,1)^{d''}$ such that  the random variables $(\1_B(y_j))_{j = 1}^N$ are  lower $\beta$-negatively dependent and let $(\hat{y}_j)_{j = 1}^N$ be a Monte Carlo sampling scheme in $[0,1)^{d''}$; we assume both sampling schemes to be mutually independent to $(x_j)_{j=1}^N$. Analogously as in the previous case we obtain
\begin{equation*}
 \begin{split}
&\Prob \left( \bigcap_{j\in J} \{\1_{A\times B}(x_j,y_j) = 0 \}\right)
\\
&= \sum_{\nu = 0}^t \binom{t}{\nu} \Prob\left(\bigcap_{j = 1}^{\nu} \{\1_A(x_j) = 1 \} \cap \bigcap_{j = \nu+1}^t \{\1_A(x_j) = 0 \} \right) \Prob\left(\bigcap_{j = 1}^{\nu} \{\1_B(y_j) = 0 \} \right)
\\
& \leq \sum_{\nu = 0}^t \binom{t}{\nu} \Prob\left(\bigcap_{j = 1}^{\nu} \{\1_A(x_j) = 1 \} \cap \bigcap_{j = \nu+1}^t \{\1_A(x_j) = 0 \} \right) \beta \Prob\left( \1_B(y_1) = 0 \right)^{\nu}
\\
& = \beta \sum_{\nu = 0}^t \binom{t}{\nu} \Prob\left(\bigcap_{j = 1}^{\nu} \{\1_A(x_j) = 1 \} \cap \bigcap_{j = \nu+1}^t \{\1_A(x_j) = 0 \} \right) \Prob\left( \1_B(\hat{y}_1) = 0 \right)^{\nu}.
 \end{split}
\end{equation*}
It follows from the case of hybrid-Monte Carlo sequences that
\begin{equation*}
 \begin{split}
  & \Prob\left( \bigcap_{j \in J} \{\1_{A \times B}(x_j,y_j) = 0 \} \right) \leq \beta \Prob\left( \bigcap_{j \in J} \{\1_{A \times B}(x_j, \hat{y}_j)= 0 \}\right)
  \\
  & \leq \alpha \beta \Prob\left(\1_{A \times B}(x_1, \hat{y}_1) = 0  \right)^t = \alpha \beta \Prob\left( \1_{A \times B}(x_1, y_1) = 0   \right)^t
 \end{split}
\end{equation*}

%and
%\begin{equation*}
% \begin{split}
%&\gamma \Prob(\1_{A \times B}(x_1,y_1) = 0 )^t
%\\
%&= \gamma \sum_{\nu = 0}^{t} \binom{t}{\nu} \Prob(\1_A(x_1) = 0 )^{t - \nu}\Prob(\1_B(y_1) = 0 )^{\nu} \Prob(\1_A(x_1) = 1 )^{\nu}.
% \end{split}
%\end{equation*}
%We aim at showing
%\begin{equation}\label{nlodPad}
% \Prob \left( \bigcap_{j\in J} \{\1_{A\times B}(x_j,y_j) = 0 \}\right) \leq \gamma \Prob(\1_{A \times B}(x_1,y_1) = 0 )^t.
%\end{equation}
%Note that the right hand side of (\ref{nlodPad}) is not influenced by the dependence structure of $(y_j)_{j = 1}^N$ and the left hand side is maximal if $(y_j)_{j = 1}^N$ is a Monte Carlo sample, this case has however already been settled.  
\end{proof}

\begin{remark}
It follows easily on closer examination of the proof that for the statement (i) of Proposition~\ref{Concatenation} to hold true we need only $(\1_A(x_j))_{j = 1}^N$ and $(\1_B(y_j))_{j = 1}^N$ to be negatively $\alpha$- respectively $\beta$-upper dependent point sets, not necessarily sampling schemes. Moreover, if in (ii) we assume that $(y_j)_{j = 1}^N$ is a Monte Carlo sampling scheme we also do not need to assume that $(x_j)_{j = 1}^N$ is a sampling scheme.
\end{remark}

\begin{remark}\label{Remark_Counterpart}
Let $\mathcal{S}'$, $\mathcal{S}''$ be systems of measurable sets in $[0,1)^{d'}$ and $[0,1)^{d''}$,
respectively. Let $(x_j)_{j=1}^N$ be an  $\mathcal{S}'$-$\alpha$-negative dependent sampling scheme in  $[0,1)^{d'}$ and  $(y_j)_{j=1}^N$ an   $\mathcal{S}''$-$\beta$-negative dependent sampling scheme in  $[0,1)^{d''}$; both sampling schemes should be mutually independent. Furthermore, let $\mathcal{P}:= (p_j)_{j=1}^N$ be the resulting concatenated sampling scheme in $[0,1)^d$, i.e., 
$p_i := (x_i,y_i)$, $i=1,\ldots,N$.
\begin{itemize}
\item[(i)] If $\mathcal{S}'=\mathcal{C}_0^{d'}$ and $\mathcal{S}'' =\mathcal{C}_0^{d''}$, we obtain from 
Proposition~\ref{Concatenation} that the mixed randomized sequence $(p_j)_{j=1}^N$ is $\mathcal{C}^d_0$-$\alpha \beta$-negatively dependent, which implies that we may directly apply Theorem~\ref{main_theo} to obtain a probabilistic discrepancy bound for $\mathcal{P}$.
\item[(ii)] If $\mathcal{S}'=\mathcal{D}_0^{d'}$ and $\mathcal{S}'' =\mathcal{D}_0^{d''}$, we
obtain from Proposition~\ref{Concatenation} that $(p_j)_{j=1}^N$ is $\alpha \beta$-negatively dependent with respect to the set system 
$$\mathcal{D}^{d'}_0 \times \mathcal{D}^{d''}_0 := \{ D'\times D''\,|\, D'\in \mathcal{D}^{d'}_0, 
D''\in \mathcal{D}^{d''}_0 \} \neq \mathcal{D}^{d}_0.$$
Hence Theorem~\ref{main_theo_GH} is unfortunately not directly applicable to $\mathcal{P}$.
Nevertheless, one may prove a counterpart of Theorem~\ref{main_theo_GH} with slightly worse constants that relies on negative dependence with respect to $\mathcal{D}^{d'}_0 \times \mathcal{D}^{d''}_0$. Namely, one may show for every $\theta \in (0,1)$ that 
\begin{equation}\label{disc_bound_mixed}
\Prob\left( D^*_N(\mathcal{P}) \leq 2 \ast 0.7729 \sqrt{ 10.7042 + \rho + \frac{\ln \big( (1-\theta)^{-1} \big) }{d}} \sqrt{\frac{d}{N}} \right) \geq \theta.
\end{equation}
The bound is based on the following simple observation: To estimate the local discrepancy of 
$\mathcal{P}$ in a test box $Q \in \mathcal{C}^d_0$, the strategy used in \cite{GH18} (and earlier in \cite{Ais11}) is to decompose $Q$ into finitely many disjoint differences of boxes $\Delta_1, \ldots, \Delta_K \in \mathcal{D}^d_0$ such that $Q= \cup_{\nu =1}^K \Delta_\nu$. This gives
\begin{equation}\label{erste_Zerlegung}
D_N(\mathcal{P}, Q) \le \sum_{\nu=1}^K D_N(\mathcal{P}, \Delta_\nu).
\end{equation}
Now let us consider a fixed index $\nu$. Then we find $A_\nu, B_\nu \in \mathcal{C}^d_0$ such that
$A_\nu \subseteq B_\nu$ and $\Delta_\nu = B_\nu \setminus A_\nu$. Furthermore, we may write $A_\nu = A_\nu'\times A_\nu''$ and $B_\nu = B_\nu' \times B_\nu''$ with $A_\nu', B_\nu' \in \mathcal{C}_0^{d'}$ and  $A_\nu'', B_\nu'' \in \mathcal{C}_0^{d''}$. Then we may represent $\Delta_\nu$ as  disjoint union
$$\Delta_\nu = (B_\nu' \setminus A_\nu') \times B_\nu'' \cup A_\nu' \times (B_\nu''\setminus A_\nu'') 
=: C^1_\nu \cup C^2_\nu.$$
Thus 
\begin{equation}\label{zweite_Zerlegung}
D_N(\mathcal{P}, \Delta_\nu) \le D_N(\mathcal{P}, C^1_\nu) + D_N(\mathcal{P}, C^2_\nu),
\end{equation}
where $C^1_\nu, C^2_\nu \in \mathcal{D}^{d'}_0 \times \mathcal{D}^{d''}_0$.
Now large deviation inequalities of Bernstein- and Hoeffding-type can be used to obtain for each of the random variables $D_N(\mathcal{P}, C^1_\nu)$, $D_N(\mathcal{P}, C^2_\nu)$ the same upper bound as for the local discrepancy $D_N(\mathcal{P}^*, \Delta_\nu)$ of a $\mathcal{D}^d_0$-$\alpha \beta$-negative dependent sampling schemes $\mathcal{P}^*$ in the proof of \cite[Theorem~4.3]{GH18}. This, combined with \eqref{erste_Zerlegung} und \eqref{zweite_Zerlegung},
results in a probabilistic discrepancy bound for $D^*_N(\mathcal{P})$ that is as most as twice as big as the one from Theorem~\ref{main_theo_GH}; for further details see \cite[Proof of Theorem~4.3]{GH18}.
\end{itemize}
\end{remark}

\section{Relations Between Notions of Negative Dependence}\label{SECT5}

It may be easily seen that the coordinatewise independent NQD property implies the pairwise negative dependence property as well as the conditional NQD property. It turns out that this is the only valid implication between the considered notions of negative dependence. In this section we give examples showing that other implications do not hold.
\subsection{Pairwise Negative Dependence and Negative Dependence}\label{SECT5.1.}
Neither the pairwise negative dependence of a sampling scheme implies the negative dependence, nor the other way round.

\begin{example}
We first show an example of a negatively dependent sampling scheme which is not pairwise negatively dependent. To this end consider a sampling scheme consisting of just two points $(p_1,p_2)$ with joint CDF $F:[0,1]^2 \rightarrow [0,1]$ given by
$$F(x,y) = \min\{x,y,\tfrac{x^2+y^2}{2}\}.$$
It is easy to see that $F(0,0) = 0, F(1,1) = 1, F$ is continuous, qusi-monotone, and $F(x,y) = F(y,x)$, which implies that $F$ is a CDF of a sampling scheme. Moreover,
$$\Prob\left(p_1 \in \left[0,q\right), p_2 \in \left[0,q\right)\right) = F(q,q) = q^2,$$
so the sampling scheme is $\mathcal{C}_0^1$ - negatively dependent. Notice that due to $d = 1,$ it is equivalent to saying that the sampling scheme is $\mathcal{C}_1^1$ - negatively dependent. However, for instance
\begin{align*}
&\Prob(p_1 \in [\tfrac{3}{4}, 1), p_2 \in [\tfrac{1}{4},1)) = 1-(F(\tfrac{3}{4},1) + F(1, \tfrac{1}{4}) - F(\tfrac{3}{4}, \tfrac{1}{4}))
\\
& = F(\tfrac{3}{4}, \tfrac{1}{4}) = \tfrac{1}{4} > (1 - \tfrac{3}{4})(1-\tfrac{1}{4}) = \Prob(p_1 \in [\tfrac{3}{4},1)) \Prob(p_2 \in [\tfrac{1}{4},1)).
\end{align*}
\end{example}

\begin{example}
To see that even the stronger coordinatewise independent NQD property does not imply the negative dependence property consider RSJ rank-1 lattice defined in Subsection \ref{R1L}. On the one hand, according to Theorem \ref{PairNegDep}, RSJ rank-1 lattice is coordinatewise independent NQD. On the other hand, let us consider the situation for $d = 2,$ and a large $N$ to be chosen later. We put $Q = [0, \tfrac{3}{N})^2.$
Obviously
$$\Prob(p_1 \in Q)^3 = \left( \frac{3}{N}  \right)^6.$$
We also have
$$\Prob(p_1 \in Q, p_2 \in Q, p_3 \in Q) \geq \frac{1}{\binom{N}{3}N(N-1)} = \frac{6}{N^2(N-1)^2(N-2)},$$
the inequality follows since for the diagonal configuration of the points (i.e. $p_j = (\tfrac{\pi(j)}{N},  \tfrac{\pi(j)}{N}) + J_j, j=1,\ldots, n$ for some permutation $\pi$ of $\{1, \ldots, N\}, k \in [N-1]$) there is one triple of points always lying in $Q.$ Notice that any generating vector of the form $g = (\tfrac{k}{N}, \tfrac{k}{N})$ and any shift of the form $S = (\tfrac{l}{N}, \tfrac{l}{N}), l \in \{0,1,\ldots, N-1\},$ results in a diagonal configuration.  
Now for $N$ large enough it holds
$$\Prob(p_1 \in Q, p_2 \in Q, p_3 \in Q) >  \Prob(p_1 \in Q)^3.$$
\end{example}

\subsection{Conditional NQD and Pairwise Negative Dependence}
\begin{example}
First we show an example of a pairwise negatively dependent sampling scheme which is not conditionally NQD.
Let $B_1 = [0,\tfrac{1}{2})^2, B_2 = [\tfrac{1}{2},1) \times [0, \tfrac{1}{2}), B_3 = [0, \tfrac{1}{2}) \times [\tfrac{1}{2},1), B_4 = [\tfrac{1}{2}, 1)^2$ denote the slots. Now we are considering a sampling scheme $\mathcal{P} = (p_1,p_2)$ such that given the slots the points are distributed uniformly within the slots and are independent. Denote $A_{ij} := \{p_1 \in B_i, p_2 \in B_j\}$ and set
$$\Prob(A_{ii}) = \tfrac{1}{16}, i = 1,2,3,4, $$
$$\Prob(A_{13}) = \Prob(A_{24}) = \Prob(A_{31}) = \Prob(A_{42}) = \tfrac{1}{32},$$
$$\Prob(A_{14}) = \Prob(A_{23}) = \Prob(A_{41}) = \Prob(A_{32}) = \tfrac{5}{32}.$$
It is easy to see that $\mathcal{P}$ is not conditionally NQD, e.g.
\begin{align*}
&\Prob(p_1^{(2)} \geq \tfrac{1}{2}, p_2^{(2)} \geq \tfrac{1}{2} | p_1^{(1)} \geq \tfrac{1}{2}, p_2^{(1)} \geq \tfrac{1}{2})
 = \tfrac{1}{3}
\\
 &> \tfrac{1}{4} = \Prob(p_1^{(2)} \geq \tfrac{1}{2}| p_1^{(1)} \geq \tfrac{1}{2}, p_2^{(1)} \geq \tfrac{1}{2}) \Prob(p_2^{(2)} \geq \tfrac{1}{2} | p_1^{(1)} \geq \tfrac{1}{2}, p_2^{(1)} \geq \tfrac{1}{2}).
\end{align*}
Showing that $\mathcal{P}$ is pairwise negatively dependent requires simple but tedious calculations and as such will be omitted. Intuitively it is clear, since the sampling scheme gives high probability to diagonal arrangements (i.e. $A_{14}, A_{23}, A_{41}, A_{32}$).
\end{example}
\begin{example}
Now we show an example of a sampling scheme which is conditionally NQD but not pairwise negatively dependentd. To this end let $X,Y$ be two independent random variables distributed uniformly on $[0,1).$ We consider a sampling scheme $\mathcal{P} = (p_1,p_2)$ given by $p_1 = (X,Y), p_2 = (Y,X).$ Let $u,v \in [0,1)^2$ and $A,B \subset [0,1)$ be measurable.
Sampling scheme $\mathcal{P}$ is conditionally NQD since
\begin{align*}
& \Prob(p_1^{(2)} \geq u^{(2)}, p_2^{(2)} \geq v^{(2)} |  p_1^{(1)} \in A, p_2^{(1)} \in B)
 \\
 &= \Prob(Y \geq u^{(2)}, X \geq v^{(2)} | X \in A, Y \in B)
\\
& = \Prob(Y \geq u^{(2)} | X \in A, Y \in B) \Prob(X \geq v^{(2)} | X \in A, Y \in B )
 \\
 &= \Prob(p_1^{(2)} \geq u^{(2)}| X \in A, Y \in B) \Prob(  p_2^{(2)} \geq v^{(2)} | X \in A, Y \in B).
\end{align*}
On the other hand $\mathcal{P}$ is not pairwise negatively dependent. To see this note that
\begin{align*}
&\Prob(p_1 \geq u)\Prob(p_2 \geq v)
\\
&= \Prob(X \geq u^{(1)}, Y \geq u^{(2)}) \Prob(Y \geq v^{(1)}, X \geq v^{(2)})
\\
&= \Prob(X \geq u^{(1)}) \Prob(Y \geq u^{(2)}) \Prob(Y \geq v^{(1)}) \Prob(X \geq v^{(2)})
\end{align*}
and
\begin{align*}
\Prob(p_1 \geq u, p_2 \geq v) = \Prob(X \geq \max(u^{(1)}, v^{(2)}), Y \geq \max(u^{(2)}, v^{(1)})).
\end{align*}
Taking for some $u^{(1)},u^{(2)} \in (0,1)$ the point $v$ satisfying $v^{(1)} = u^{(2)}$ and $v^{(2)} = u^{(1)}$ yields the claim. 
\end{example}

\bibliographystyle{siam}

\end{document}